\documentclass[a4paper]{article}

\usepackage[utf8]{inputenc}

\usepackage{lmodern}

\usepackage{amsmath}        
\usepackage{amsfonts}       
\usepackage{amssymb}        
\usepackage{amsthm}         
\usepackage{bbding}         
\usepackage{bm}             
\usepackage{graphicx}       
\usepackage{fancyvrb}       

\usepackage[style=numeric, maxnames=3, giveninits=true, backend=biber]{biblatex}
\addbibresource{bibliography.bib}

\usepackage[nottoc]{tocbibind} 
\usepackage{dcolumn}        
\usepackage{booktabs}       
\usepackage{paralist}       
\usepackage{xcolor}         
\usepackage{mathtools}      
\usepackage{textgreek}      
\usepackage{algorithmic}
\usepackage{algorithm}
\numberwithin{algorithm}{section}

\usepackage{setspace}       
\usepackage{caption}        
\usepackage{subcaption}     
\usepackage{afterpage}

\setlength{\textwidth}{6.4in}
\setlength{\columnsep}{0.3125in}
\setlength{\topmargin}{0.325in}
\setlength{\headheight}{0in}
\setlength{\headsep}{0in}
\setlength{\parindent}{1pc}
\setlength{\oddsidemargin}{0in}
\setlength{\evensidemargin}{0in}


\usepackage[breaklinks=true,letterpaper=true,colorlinks,bookmarks=false]{hyperref}

\theoremstyle{plain}
\newtheorem{theorem}{Theorem}[section]
\newtheorem{lemma}[theorem]{Lemma}
\newtheorem*{lemma*}{Lemma}

\newtheorem*{theorem*}{Theorem}
\newtheorem{corollary}[theorem]{Corollary}
\newtheorem*{corollary*}{Corollary}

\theoremstyle{plain}

\newtheorem*{definition*}{Definition}

\theoremstyle{remark}
\newtheorem{remark}[theorem]{Remark}

\theoremstyle{definition}

\newtheorem{problem}[theorem]{Problem}



\DefineVerbatimEnvironment{code}{Verbatim}{fontsize=\small, frame=single}

\newcommand{\R}{\mathbb{R}}
\newcommand{\N}{\mathbb{N}}
\newcommand{\E}{\mathbb{E}}
\newcommand{\F}{\mathbb{F}}

\newcommand{\ve}{\delta} 
\newcommand{\ur}{\varepsilon} 
\newcommand{\HH}{H_0^1(D)}
\newcommand{\x}{\widehat{x}}
\renewcommand{\u}{\widehat{u}}
\newcommand{\prsp}{(\Omega,\mathcal{U},\mathbb{P})}
\newcommand{\q}{\Tilde{Q}}

\newcommand{\qmlmc}{\widehat{Q}_{L,\{N_l\}}^{\text{ML}}}
\newcommand{\qmpmlmc}{\widehat{Q}_{L,\{N_l\}}^{\text{MPML}}}

\newcommand{\bilf}{\mathcal{A}}
\newcommand{\costdec}{m^{\frac{\gamma-\beta}{2}}}
\newcommand{\costmltot}{C^{\text{ML}}}
\newcommand{\costmpmltot}{C^{\text{MPML}}}



\DeclareMathOperator{\var}{\textrm{var}}



\newcommand{\norm}[1]{\|{#1}\|}
\newcommand{\enorm}[1]{\|{#1}\|_{2}}

\newcommand{\qhel}[1]{\Tilde{Q}_{#1}}
\newcommand{\yhel}[1]{\Tilde{Y}_{#1}}


\newcommand{\rev}[1]{\textcolor{black}{#1}}

\overfullrule=1mm

\begin{document}

\title{Exploiting Inexact Computations in Multilevel Monte Carlo and Other Sampling Methods
\\[0.5cm] \large{}}

\author{Josef Mart\'inek\thanks{Institute for Mathematics and Interdisciplinary Center for Scientific Computing (IWR), Heidelberg University, 69120, Heidelberg, Germany (martinek@math.uni-heidelberg.de, r.scheichl@uni-heidelberg.de)}, Erin Carson\thanks{Department of Numerical Mathematics, Faculty of Mathematics and Physics, Charles University, Sokolovsk\'a 49/83, 186 75 Praha 8, Czechia (carson@karlin.mff.cuni.cz)}, Robert Scheichl\footnotemark[1]\\
{\tt\small }
}

\maketitle

\begin{abstract}
Multilevel sampling methods, such as multilevel and multifidelity Monte Carlo, multilevel stochastic collocation, or delayed acceptance Markov chain Monte Carlo, have become standard uncertainty quantification (UQ) tools for a wide class of forward and inverse problems. The underlying idea is to achieve faster convergence by leveraging a hierarchy of models, such as partial differential equation (PDE) or stochastic differential equation (SDE) discretisations with increasing accuracy. By optimally redistributing work among the levels, multilevel methods can achieve significant performance improvement compared to single level methods working with one high-fidelity model. Intuitively, approximate solutions on coarser levels can tolerate large computational error without affecting the overall accuracy. We show how this can be used in high-performance computing applications to obtain a significant performance gain.

As a use case, we analyse the computational error in the standard multilevel Monte Carlo method and formulate an adaptive algorithm which determines a minimum required computational accuracy on each level of discretisation. We show two examples of how the inexactness can be converted into actual gains using an elliptic PDE with lognormal random coefficients. Using a low precision sparse direct solver combined with iterative refinement results in a simulated gain in memory references of up to $3.5\times$ compared to the reference double precision solver; while using a MINRES iterative solver, a practical speedup of up to $1.5\times$ in terms of FLOPs is achieved. These results provide a step in the direction of energy-aware scientific computing, with significant potential for energy savings.
\end{abstract}

\textbf{Keywords.} multilevel, Monte Carlo, mixed precision, iterative refinement, energy-efficient computing.
\\

\textbf{Mathematics Subject classifications.}
65Y20, 65C05, 65C30, 65G20, 60-08.

\setlength{\emergencystretch}{100pt}
\section{Introduction}
Suppose we are interested in sampling from a probability distribution of a certain quantity $Q$, which depends on the (infinite-dimensional) solution of a partial differential equation  (PDE) or a stochastic differential equation  (SDE). In most cases, direct access to $Q$ is unavailable. Instead, a numerical model is used to obtain a finite-dimensional approximation $Q_L$ of the quantity $Q$. With increasing $L$ the accuracy of the model increases, but so does the cost of computing the solution. This can be a finite element or finite difference method in the case of PDEs, or an Euler-Maruyama discretisation for SDEs. A first idea to approximate the unavailable distribution of $Q$ is to choose a high-fidelity model $Q_L$ with $L$ sufficiently large and to use this model for sampling. Instead, the key idea of multilevel and multifidelity sampling methods, such as multilevel Monte Carlo (MLMC) \cite{giles2015multilevel}, multilevel stochastic collocation \cite{teckentrup2015multilevel}, \rev{multilevel MCMC 
\cite{dodwell2019multilevel}, or multi-index Monte Carlo (MIMC) \cite{haji2016multi}}, is to use a hierarchy of models~$Q_0,\ldots,Q_L$ with increasing accuracy. Then, by combining all the samples from models $Q_0,\ldots,Q_L$ in a suitable way, a significant cost gain can be achieved. \rev{This paper adds one additional layer to the standard analysis, which has, to our knowledge, largely been neglected.} In practice, the finite-dimensional model $Q_l$ is never solved exactly, rather an approximation $\q_l$ is obtained on a computer. We show that multilevel methods often admit safe use of inexact computations on coarse levels $l$ without affecting overall sampling accuracy. \rev{By employing techniques of energy-efficient computing this can lead to significant performance gains.}

\subsection{\rev{Inexact computations and energy-efficient computing}} 
\rev{In the past decades, power has become the principal constraint for computing performance~\cite{horowitz20141}. On the hardware level, this has led to the development of domain specific accelerators which enhance computing performance for specific applications \cite{dally2020domain,jouppi2021tenlessons}. On the software level, attention has recently been brought to energy-aware algorithms for scientific computing. In recent years, there has been growing interest in the selective use of low precision floating point arithmetic to accelerate scientific computations while maintaining acceptable levels of accuracy; cf. \cite{higham2022mixed}.}

\rev{An example of such an approach is mixed-precision iterative refinement, which can be used when the computation of the inexact $\q_l$ involves the solution of a linear system \cite{higham2002accuracy,vieuble2022mixed}. The potential for using iterative refinement in designing energy-efficient linear solvers was studied in \cite{haidar2018design} in the context of $LU$ factorisation of a dense matrix. The FP16-TC dhgesv-TC (Tensor Cores) solver using iterative refinement and half precision for the matrix factorisation achieved more than $5\times$ improvement in terms of energy efficiency than the standard dgesv routine which factorises the matrix in double precision with no iterative refinement.}

\rev{In principle, any iterative procedure can be used to lower the accuracy of the inexact solution $\q_l$. The optimal number of iterations can then be determined via a suitable stopping criterion -- an approach independent of the iterative procedure. To demonstrate the flexibility of such an approach, we consider in this work both direct and iterative solvers for linear systems. To exploit inexact computations efficiently within direct methods, we employ mixed-precision iterative refinement.}

\rev{We discuss in detail how these techniques can be used to improve the performance of multilevel sampling methods. We carry out a detailed analysis for multilevel Monte Carlo and comment on how a similar analysis can be done for multilevel Markov chain Monte Carlo \cite{dodwell2019multilevel} and multi-index Monte Carlo \cite{haji2016multi}.}

\subsection{\rev{Case study: multilevel Monte Carlo}} 

\rev{For a detailed numerical analysis we restrict ourselves to forward uncertainty quantification (UQ) and to multilevel Monte Carlo, tying the required computational accuracy to the discretisation error at level $l$.
We choose a theoretical framework and a suitable error model to quantify the computational error. As a \emph{model problem}, we consider an elliptic PDE with lognormal random coefficients of the form
\begin{equation*}
    -\nabla\cdot\bigl(a(\cdot,\omega)\nabla u(\cdot,\omega)\bigl)=f(\cdot,\omega)
\end{equation*}
depending on the random parameter $\omega$.
%
Such problems arise, for example, in UQ of groundwater flow \cite{cliffe2011multilevel}. The coefficient $a$ and the right-hand side $f$ are assumed to be (infinite-dimensional) random fields. Importantly, the dominant cost lies in the solution of a large system of linear algebraic equations.} 

\rev{Given a (scalar-valued) function $G$ of the solution $u$, such as the  solution at a certain point in the domain, we are interested in sampling from the unavailable distribution of the quantity $Q=G(\omega)$ to compute statistics of this distribution, e.g., the mean $\mathbb{E}[Q]$. In practice, to obtain a computable approximation $Q_L$ of $Q$ we choose the finite element method (FEM), and to approximate the expected value $\mathbb{E}[Q_L]$ we employ a Monte Carlo (MC) method. This introduces a discretisation error and a sampling error. A significant variance reduction can be achieved if the samples are taken on a hierarchy of discretisation levels. This is the underlying idea of the MLMC method~\cite{heinrich2001multilevel,cliffe2011multilevel,giles2015multilevel}.}

\rev{In our model problem, the dominant cost on each level of this hierarchy is the solution of the resulting FE system for each parameter $\omega$. The cost of sampling the input random fields $a(\cdot,\omega)$ and $f(\cdot,\omega)$ can be largely neglected. The error introduced by solving the linear system will be referred to as the computational error. The aim is to balance this computational error with the sampling and discretisation errors, and to employ techniques of energy-efficient computing to obtain performance gains.}

\rev{This is in contrast to applications in computational finance, such as \cite{giles2024rounding}, which is concerned with the analysis of rounding errors in generating random variables in the context of  stochastic differential equations (SDEs) and applications within MLMC. See also the earlier paper \cite{brugger2014mixed}, where the authors explored the use of lower precision on field programmable gate arrays (FPGA) in the MLMC method for SDEs.}

\subsection{\rev{Main contributions and outline of the paper}}

\rev{In this paper, we
\begin{itemize}
    \item establish a theoretical framework for quantifying the computational error in the MLMC method by choosing a suitable error model (Section~\ref{chap_MPMC});
    \item propose a novel adaptive algorithm, which determines the minimum required computational accuracy on each level of discretisation using a-priori error estimates with no additional cost (Section~\ref{sec_MPMLMC_adapt});
    \item provide a theoretical basis for applying this adaptive algorithm to an elliptic PDE with random coefficients and random right-hand sides (Section~\ref{sec_MPMLMCFE});
    \item demonstrate the efficiency of this adaptive algorithm in a sequence of numerical examples, achieving up to a factor of about $3.5 \times$ in simulated memory gain for iterative refinement and a speedup by a factor of about $1.5 \times$ in terms of floating point operations in an iterative solver. A cost analysis and possible use in energy-efficient scientific computing are presented in Section~\ref{sec_cost_analysis}.
\end{itemize}\smallskip}

\rev{The manuscript is divided into five sections. In Section \ref{chap_num_methods}, we discuss linear solvers and various types of errors they incur. We give an overview of floating point arithmetic, the technique of iterative refinement, as well as some convergence results for Krylov subspace methods. Section \ref{sec_fem_rand} introduces the elliptic model problem and its numerical solution via FEM. Selected FE convergence results are presented and the convergence of the FEM with inexact solvers is discussed. After a brief overview of the standard MLMC method, the new theoretical results are presented in Section \ref{sec_inexMLMC}, where we analyse the computational error in MLMC along with the adaptive algorithm and its computational complexity. Finally, the numerical results, as well as a thorough discussion are given in Section \ref{chap_num_results}.}

\section{\rev{Inexact computations in linear solvers}}\label{chap_num_methods}

Let $\widehat{x}$ be a solution of a linear system $Ax=b$ computed by an algorithm. The solution is said to be computed effectively to precision $\ve_e$ if
\begin{equation}\label{eq_eff_sol_linsys}
    \frac{\enorm{b-A\widehat{x}}}{\enorm{b}}\leq C\ve_e
\end{equation}
for a constant $C>0$. Here $\enorm{\cdot}$ denotes the Euclidean norm, although it is possible to use any other norm in principle. The constant $C$ may or may not be dependent on the matrix $A$ and other input data; this is problem-dependent.

\subsection{Floating point arithmetic}\label{sec_fp}

A floating point (FP) number system $\F$ is a finite subset of real numbers whose elements can be written in a specific form; see~\cite{higham2002accuracy} for a thorough description. \rev{Here,} all computations in FP arithmetic are assumed to be carried out under the following standard model: 
For all $x,y\in\F$ 
\begin{equation}\label{eq_standard_model}
    fl(x\,\text{op}\, y)=(1+\nu)(x\,\text{op}\,y),\quad |\nu|\leq \ur,\quad \text{op}=+,-,\times,/,
\end{equation}
where $\ur$ is the unit roundoff. \rev{Since most computations can be decomposed in terms of these basic operations, the above} assumption allows us to analyse the error of a given algorithm. For simplicity, we will often abbreviate ``the floating point arithmetic format with unit roundoff $\ur$'' to ``the precision $\ur$''. 

The standard model is valid in particular for IEEE arithmetic, 
a technical standard of floating point arithmetic, which assumes the preliminaries above and adds other technical assumptions; see \cite{higham2002accuracy} for an overview. The IEEE standard defines several basic formats. In this work, we use formats both standardized and not standardised by IEEE. All of the formats we will use are hardware-supported, \rev{for instance} by the NVIDIA H100 SXM GPU (for specifications see \cite{nvidia_h100_specs}). To be specific, in this manuscript we use quarter (q43), half, single, and double precision. Half, single, and double are basic IEEE formats. The quarter precision format we use has $4$ exponent bits and $3$ significand bits, which means storing one number requires $8$ bits including the sign. The unit roundoff of quarter precision is $2^{-4}$ and its range is $10^{\pm2}$.


Not all arithmetic operations in an algorithm need to be carried out in the same precision. There are techniques which allow us to improve the accuracy of the computed solution via, e.g., iterative refinement, discussed 
in Section \ref{sec_itref}. Moreover, the theoretical error estimates of numerical methods often exaggerate the true error. This motivates us to introduce the so-called effective precision $\ve_e$ in \eqref{eq_eff_sol_linsys}, which 
is not necessarily a hardware or software-supported precision (e.g., like the IEEE standards). It is rather a parameter expressing how accurately the solution is actually computed.

\subsection{Krylov subspace methods}\label{sec_kryl_methods}

Krylov subspace methods are a powerful class of iterative methods 
for large-scale linear systems of equations and eigenvalue problems, particularly those involving sparse or structured matrices. Our application yields matrices which are symmetric positive definite (see Section \ref{sec_fem_rand}), which makes the conjugate gradient (CG) and MINRES methods two suitable Krylov subspace methods for our purpose. The following overview is adapted from \cite[Section 3.1]{greenbaum1997iterative}.

Consider a linear system $Ax=b$ with  symmetric positive definite  $A\in\R^{n\times n}$. Let $x_0\in\R^n$ be an inital estimate of the solution, $e_0=x-x_0$ the initial error, and $r_0 = b-Ax_0$ the initial residual. Then, in the $k$th iteration, CG and MINRES minimize the $A$-norm of the error and the Euclidean norm of the residual over the Krylov subspace $x_0 + \text{span}\{r_0, A^2 r_0, \dots, A^{k-1} r_0\}$, respectively.

\rev{The stopping criterion we use is on the relative residual. It is motivated by our specific use of the linear solver and will allow us to apply abstract error estimates from Section \ref{chap_MPMC}; see also Section \ref{chap_num_results}. To be precise, we require the solution produced by the iterative algorithm to satisfy \eqref{eq_eff_sol_linsys} for a given~$\ve_e>0$, which motivates the choice of MINRES. It has been shown that for Hermitian matrices the residuals produced by CG and MINRES are closely related (see \cite[Exercise 5.1]{greenbaum1997iterative}). Therefore, both methods are expected to perform similarly for our model problem in Section \ref{sec_fem_rand}.}

It is clear that in exact arithmetic, MINRES converges to the true solution in a finite number of iterations. In finite precision arithmetic, this is not the case, and the convergence analysis is nontrivial; see for example \cite[Chapter 4]{greenbaum1997iterative}. We therefore perform all calculations in the MINRES method in double precision. In order to apply the abstract error estimates from Section \ref{chap_MPMC} to MINRES, we assume $\widehat{x}_k\approx x_k$, where $\widehat{x}_k$ and $x_k$ are the MINRES solutions from $k$-th iteration computed in floating point and exact arithmetic, respectively. An alternative approach to performing all calculations in high precision would be to employ a Krylov subspace method coupled with iterative refinement; see Section \ref{sec_itref} for an overview of iterative refinement. Iterative refinement as a technique to improve accuracy of a Krylov subspace method was used, for example, in \cite{carson2017new} and \cite{carson2018accelerating} with the GMRES method.

\subsection{Iterative refinement}\label{sec_itref}

\rev{Consider a linear system $Ax=b$ where $A\in\R^{n\times n}$ and $b\in\R^n$. Iterative refinement is a technique to enhance the accuracy of a numerical solution to this linear system by computing a residual vector $r$ and then correcting the current approximation by solving a linear system with right hand side $r$ to reduce the error; see~\cite[Section 12]{higham2002accuracy} for a detailed overview. In this way, the process can be repeated iteratively until a limiting level of accuracy is achieved.
The particular version of iterative refinement used in this work is presented in Algorithm \ref{alg_itref}. It is a special case of \cite[Algorithm 1.1]{carson2018accelerating}.}  
\begin{algorithm}[t]
\caption{Iterative refinement}
\label{alg_itref}
\begin{algorithmic}[1]
    \REQUIRE $A\in\mathbb{R}^{n\times n}$, $b\in\mathbb{R}^n$, both in precision $\ur$, tolerance $\ve_e>0$.
    \ENSURE an approximation $x_i$ of the solution $x$ stored in precision $\ur$.
    \STATE Factorise $A$ in precision $\ur_f$. \label{aux26}
    \STATE Solve $Ax_0=b$ in precision $\ur_f$ by substitution and store $x_0$ in precision $\ur$.
    \FOR{$i=0$ to $i_{\max}$}
        \STATE Compute $r_i=b-Ax_i$ in precision $\ur_r$ and round $r_i$ to precision $\ur_s$. \label{aux_step4}
        \IF{$\norm{r_i}/\norm{b} < \ve_e$} \label{aux28}
            \STATE Exit algorithm. \label{aux29}
        \ENDIF
        \STATE Solve $Ad_i=r_i$ by (forward/backward) substitution in precision $\ur_f$ or $\ur$\\ (using the factorisation computed in step \ref{aux26}) and store $d_i$ in precision $\ur$. \label{aux27}
        \STATE $x_{i+1}=x_i+d_i$ in precision $\ur$.
    \ENDFOR
\end{algorithmic}
\end{algorithm}

\rev{Algorithm \ref{alg_itref} uses the following three precisions:} 
\begin{itemize}
    \item $\ur$ is the (working) precision \rev{that the data $A$, $b$ and solution $x$ are stored in}, 
    \item $\ur_f$ is the precision \rev{that the factorisation of $A$ is computed in},
    \item $\ur_r$ is the precision that residuals are computed in.
    \item \rev{$\ur_s$ is the precision that the correction equation is solved in}.\footnote{Note that in this subsection and in Section \ref{sec_fem_rand} the symbol $\ur_s$ does not denote single precision here. We use it to keep the indices in accordance with \cite{carson2018accelerating}, where the precisions are denoted by $u_r$, $u$, $u_f$, and $u_s$, respectively.}
\end{itemize}

\rev{The precisions $\ur_r$, $\ur$, $\ur_s$, and $\ur_f$ take only the values quarter, half, single, or double precision in this work, and we assume that $\ur_r\leq\ur\leq\ur_s\leq\ur_f$. Since we assume a direct linear solver with Cholesky factorisation, we effectively can choose  $\ur_s=\ur_f$ in our case. From the perspective of the limiting tolerance for the residual norm $\norm{r_i}/\norm{b}$, no numerical benefit is obtained by choosing $\ur_s<\ur_f$. Nevertheless, it turns out that using a higher precision in step \ref{aux27} can help in practice to reach the same desired tolerance in fewer iterations with negligible additional cost per iteration.} There is also no reason to use extra precision to compute the residual in step \ref{aux_step4} of the algorithm, since we are only interested in bounding the backward error. Computing the residual in higher precision is useful if bounding the forward error is of interest; see \cite{carson2018accelerating}.

\rev{Algorithm \ref{alg_itref} is used extensively in this work to achieve the desired accuracy of the computed solution. Additional benefits arise in terms of cost. The key point is to reuse the factorisation computed in step \ref{aux26} of the algorithm in step \ref{aux27} of the algorithm. By reusing the factorisation, the accuracy of the solution can be improved with little additional cost, since} the factorisation is typically the dominant part in terms of the required number of operations; see \cite[Section 2.2]{vieuble2022mixed}.

\rev{As in our Krylov subspace method, the stopping criterion in Algorithm \ref{alg_itref} is on} the relative residual norm. It is again motivated by our specific use of iterative refinement; see Sections \ref{chap_MPMC} and \ref{chap_num_results}. From the definition of the stopping criterion in step \ref{aux28} it follows that if Algorithm \ref{alg_itref} converges, then the produced solution is computed effectively to precision $\ve_e$ in the sense of \eqref{eq_eff_sol_linsys}. This will allow us to apply abstract error estimates from Section \ref{chap_MPMC} to iterative refinement. The convergence of Algorithm \ref{alg_itref} is analysed thoroughly in \cite{carson2018accelerating}. \rev{We will return to it 
in Section \ref{sec_fem_rand_fin_prec}.}

\section{Finite element methods for PDEs with random data}\label{sec_fem_rand}
As a use case 
for inexact computations within multilevel sampling methods, we now consider the multilevel Monte Carlo method and a standard elliptic PDE with random data, discretised via finite elements (FE). Problems of this kind arise, for example, in geosciences, namely, in the study of groundwater flow; see \cite{scheidegger1957physics, hoeksema1985analysis, freeze1975stochastic}, and the references therein. 

\subsection{An elliptic PDE with random data and its FE solution}
We now state the standard weak formulation of an elliptic PDE with random data: 

\begin{problem}[AVP with random data]\label{pr_AVP_rnd}
Let $\prsp$ be a probability space and 
$V\coloneqq\HH$ where $D$ is a bounded domain in $\mathbb{R}^d$, $d=2,3$. 
For $\omega\in\Omega$, we define $\bilf:V\times V\times\Omega \rightarrow\R$ and $l:V\times\Omega \rightarrow \R$ by
\begin{align}\label{eq_avp_rnd}
    \begin{split}
        \bilf\bigl(u(\cdot,\omega),v,\omega\bigr)&\coloneqq\int_D a(x,\omega)\nabla u(x,\omega)\cdot\nabla v(x)\text{d}x\quad\text{and}\\ l(v,\omega)&\coloneqq\int_Df(x,\omega)v(x)\text{d}x,
    \end{split}
\end{align}
respectively, where $a$ and $f$ are fixed random fields satisfying $a(\cdot,\omega)\in L^\infty(D)$ and $f(\cdot,\omega)\in L^2(D)$. A function $u(\cdot,\omega)\in V$ is a solution of this \emph{abstract variational problem} if it satisfies for a.e.\ $\omega\in\Omega$
\[
\bilf\bigl(u(\cdot,\omega),v,\omega\bigr)=l(v,\omega) \quad \text{for all} \ \ v\in V. 
\]
\end{problem}

In order to prove the unique solvability of the AVP with random data, we assume that there exist $a_{min}$ and $a_{max}$ such that  
\begin{equation}\label{eq_ass_coeff_bound_rnd}
    0<a_{min}\leq a(x,\omega)\leq a_{max}<\infty \quad \text{for a.e.} \ \  \omega\in\Omega \ \ \text{and a.e.} \ \ x\in D.
\end{equation}
Under the assumption \eqref{eq_ass_coeff_bound_rnd} the unique solvability of the AVP with random data (Problem~\ref{pr_AVP_rnd}) can be proved sample-wise using the Lax-Milgram lemma in the standard way; see \cite{babuska2004galerkin}. The coercivity and continuity constants in the Lax-Milgram lemma do not depend on $\omega$. More details regarding the analysis of Problem \ref{pr_AVP_rnd} in the stochastic context can be found in \cite{babuska2004galerkin, barth2011multi}. Let us note that if the bounds in \eqref{eq_ass_coeff_bound_rnd} are weakened and depend on $\omega$, the analysis is still possible but it becomes more complicated; see~\cite{teckentrup2013further} for a thorough discussion.

To obtain a FE error estimate in the $L^2$ norm, the domain $D$ is assumed to be convex and the random field $a$ is assumed to be uniformly Lipschitz continuous, i.e., 
there exists $L>0$ such that 
\begin{equation}\label{eq_ass_higher_regularity_rnd}
    |a(x_1,\omega)-a(x_2,\omega)|\leq L\|x_1-x_2\| \quad \text{for a.e.} \ \  \omega\in\Omega \ \ \text{and a.e.} \ \ x_1,x_2 \in D.
\end{equation}

In order to solve Problem \ref{pr_AVP_rnd}, we use conforming FEs on a shape-regular and quasi-uniform family of triangulations of the domain $D\subset\R^n$ (in our examples $n=2$) with a mesh parameter $h>0$. We employ (piecewise linear) $\mathcal{P}_1$ Lagrange elements to compute the \emph{discrete solution} $u_h(\cdot,\omega)$ to the AVP with random data (Problem~\ref{pr_AVP_rnd}) in the FE space $V_h \subset V$. Using a basis of $V_h$ consisting of hat functions $\phi_j$, this is equivalent to solving a linear system $A(\omega)x(\omega)=b(\omega)$ with a positive definite matrix~$A(\omega)$;
see \cite{ern2004theory} for details.

\subsection{Approximate FE solutions using inexact solvers}\label{sec_fem_rand_fin_prec}

 Due to the limitations of inexact linear solvers, the discrete solution $u_h$ is not obtained exactly in practice. Instead, we compute an approximation $\u_h$. The aim of this section is to estimate the error $\|u_h(\cdot,\omega)-\u_h(\cdot,\omega)\|_{\HH}$ by means of the residual of the solution of the FE system $Ax=b$ and investigate how iterative refinement from Section \ref{sec_itref} can be employed to solve this system. Let us first introduce some notation.

Let $\u_h$ be the approximation of the discrete solution $u_h$ to Problem \ref{pr_AVP_rnd} such that%
\begin{equation}\label{eq_inexact_fe_sol}
    \u_h(\cdot,\omega)=\sum_{j=1}^n \x_j(\omega)\phi_j
\end{equation}
and let $r(\omega)\coloneqq A(\omega)\x(\omega)-b(\omega)$ denote the residual.

\begin{lemma}\label{th_fem_resiudal_est_rand}
    Let $u_h$ be the discrete solution of 
    Problem \ref{pr_AVP_rnd} and let $\u_h$ be the approximation of $u_h$ defined above. Then 
    \begin{equation*}
        \|u_h(\cdot,\omega)-\u_h(\cdot,\omega)\|_{\HH}\leq C\|f(\cdot,\omega)\|_{L^2(D)}\frac{\enorm{r(\omega)}}{\enorm{b(\omega)}} \quad \text{for a.e.} \ \  \omega\in\Omega,
    \end{equation*}
    where $C$ is independent of $h$, $u$, and $\omega$ and $\enorm{\cdot}$ denotes the Euclidean norm on $\R^n$.
\end{lemma}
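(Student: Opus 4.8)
The plan is to transfer the error from the finite element space to the coefficient vector, use the symmetric positive definiteness of the stiffness matrix to write the energy norm of the error exactly in terms of the algebraic residual $r(\omega)$, and then absorb every mesh-dependent factor into a single $h$-independent constant by invoking the norm equivalences between finite element functions and their coefficient vectors.

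First I would set $e_h := u_h - \u_h = \sum_{j=1}^n w_j(\omega)\phi_j$ with coefficient vector $w(\omega) := x(\omega) - \x(\omega)$. Writing $A_{ij} = \bilf(\phi_j,\phi_i,\omega)$ and $b_j = l(\phi_j,\omega)$, the exact discrete solution obeys $Ax = b$, so $Aw = b - A\x = -r$. Because $\bilf(\cdot,\cdot,\omega)$ is symmetric, evaluating it on $e_h$ gives the quadratic form $\bilf(e_h,e_h,\omega) = \T{w}Aw = \T{r}A^{-1}r$. Using coercivity from \eqref{eq_ass_coeff_bound_rnd} together with $\|\cdot\|_{\HH}$ being equivalent to $\|\nabla\,\cdot\,\|_{L^2(D)}$ (the Poincaré–Friedrichs constant absorbed into $C$), this already yields $\|e_h\|_{\HH}^2 \le a_{min}^{-1}\,\T{r}A^{-1}r \le a_{min}^{-1}\lambda_{min}(A)^{-1}\enorm{r}^2$.

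It then remains to control $\lambda_{min}(A)$ from below and $\enorm{b}$ from above, both of which I expect to scale with $h$. For the spectrum, for any $y\in\R^n$ with associated function $y_h=\sum_j y_j\phi_j$ one has $\T{y}Ay = \int_D a|\nabla y_h|^2 \ge a_{min}\T{y}Ky$, where $K$ is the coefficient-free stiffness matrix; combining Poincaré with the standard mass-matrix bound $\T{y}My \ge c_M h^d\enorm{y}^2$ on a shape-regular quasi-uniform mesh gives $\lambda_{min}(K)\ge c_K h^d$, hence $\lambda_{min}(A)\ge a_{min}c_K h^d$. For the load vector, Cauchy–Schwarz and the companion bound $\T{y}My\le C_M h^d\enorm{y}^2$ give, via $\T{y}b=(f(\cdot,\omega),y_h)_{L^2(D)}$ and the choice $y=b$, the estimate $\enorm{b}\le \sqrt{C_M}\,h^{d/2}\|f(\cdot,\omega)\|_{L^2(D)}$.

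Finally I would multiply and divide by $\enorm{b}$: from $\|e_h\|_{\HH}\le a_{min}^{-1}(c_K h^d)^{-1/2}\enorm{r}$ and the load bound, the two factors $h^{d/2}$ cancel, leaving $\|e_h\|_{\HH}\le a_{min}^{-1}\sqrt{C_M/c_K}\,\|f(\cdot,\omega)\|_{L^2(D)}\,\enorm{r}/\enorm{b}$, which is the claim with $C=a_{min}^{-1}\sqrt{C_M/c_K}$ independent of $h$, $u$, and $\omega$. The main obstacle is exactly this $h$-independence: the intermediate quantities carry opposite powers of $h$ ($\lambda_{min}(A)\sim h^d$ and $\enorm{b}\sim h^{d/2}$), and the result hinges on their clean cancellation through the relative residual $\enorm{r}/\enorm{b}$. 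The technical crux is therefore pinning down the $h$-scaling of the mass and stiffness matrices on quasi-uniform meshes and verifying that all constants are genuinely $\omega$-independent—which holds because $M$ and $K$ are coefficient-free and $a_{min}$ is uniform in $\omega$.
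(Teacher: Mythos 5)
Your proof is correct and is, in substance, the same argument the paper uses: the paper simply cites \cite[Proposition 9.19]{ern2004theory} for the bound and then checks that the constant $C=\kappa(\mathcal{M}_t)^{1/2}/(c_{tP}\alpha)$ is $\omega$-independent via \eqref{eq_ass_coeff_bound_rnd}, and your self-contained derivation (the identity $\T{w}Aw=\T{r}A^{-1}r$, the mass-matrix scalings $c_M h^d\enorm{y}^2\le \T{y}My\le C_M h^d\enorm{y}^2$, and the cancellation of the $h^{d/2}$ powers through the relative residual) is precisely the content of that proposition, with your constant $a_{min}^{-1}\sqrt{C_M/c_K}$ matching the cited one. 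The only thing you supply beyond the reference is the explicit verification of the $h$- and $\omega$-independence, which you handle correctly by noting that $M$ and $K$ are coefficient-free and that $a_{min}$ is uniform in $\omega$.
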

\begin{proof}
    For $\omega\in\Omega$ fixed,
    the bound follows from \cite[Proposition 9.19]{ern2004theory}, with a constant independent of $u$ and $h$. 
To see that $C$ is also independent of $\omega$, we write out the constant $C$ from \cite[Proposition 9.19]{ern2004theory} explicitly: 
\begin{equation}\label{eq_FE_in_FP_constant}
        C=\frac{\kappa(\mathcal{M}_t)^{1/2}}{c_{tP}\alpha}.
    \end{equation}
    Here, $c_{P}$ is the Poincar\'e constant, 
    $\alpha$ is the coercivity constant of the bilinear form~$\bilf$~and~$\kappa(\mathcal{M}_t)$ is the condition number of the mass matrix $\mathcal{M}_t$. It follows from the definition of $\mathcal{M}_t$ that $\kappa(\mathcal{M}_t)$ is independent of $\omega$; see 
    \cite[Section 9.1.3]{ern2004theory}. Due to~\eqref{eq_ass_coeff_bound_rnd}, 
    $\alpha$ is also independent of $\omega$. This completes the proof.
\end{proof}

Thus, 
to control the error $\|u_h(\cdot,\omega)-\u_h(\cdot,\omega)\|_{\HH}$ it suffices to control the relative residual $\enorm{r(\omega)}/\enorm{b(\omega)}$ of the resulting linear system. In the following we derive the convergence rate of iterative refinement from Section \ref{sec_itref} applied to the~FE~system from Problem \ref{pr_AVP_rnd}. This will guide our choice of the precisions in iterative refinement (Algorithm \ref{alg_itref}). To proceed with our analysis, we need some auxiliary inequalities from \cite{ern2004theory}, which we summarize here.
\begin{lemma}\label{th_aux_fe_inequalities}
    Let $A(\omega)x(\omega)=b(\omega)$ be the FE system corresponding to the approximate FE solution $\u_h$ of Problem \ref{pr_AVP_rnd} as in \eqref{eq_inexact_fe_sol}. The following estimates hold:
    \begin{enumerate}
        \item $\norm{A(\omega)}_2\leq c$, and \label{aa1}
        \item $\kappa_2\big(A(\omega)\big)\leq ch^{-2}$, \label{aa2}
    \end{enumerate}
    where $\kappa_2$ is the condition number of $A$ with respect to the spectral norm and the generic constant $c$ is independent of the discretisation parameter $h$ and of $\omega$.
\end{lemma}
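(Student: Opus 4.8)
The plan is to prove the two estimates in Lemma~\ref{th_aux_fe_inequalities} separately, in each case tracking the dependence of all constants on the mesh parameter $h$ and the random parameter $\omega$, and showing that the $\omega$-dependence can be removed using the uniform bounds \eqref{eq_ass_coeff_bound_rnd}. The key structural fact I would exploit is that the stiffness matrix $A(\omega)$ with entries $A_{ij}(\omega)=\bilf(\phi_j,\phi_i,\omega)=\int_D a(x,\omega)\nabla\phi_j\cdot\nabla\phi_i\,\text{d}x$ is sandwiched, in the sense of quadratic forms, between $a_{min}$ and $a_{max}$ times the stiffness matrix $K$ of the Laplacian (the $a\equiv 1$ case). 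Concretely, for any $y\in\R^n$ corresponding to $v_h=\sum_j y_j\phi_j\in V_h$, assumption \eqref{eq_ass_coeff_bound_rnd} gives
\begin{equation*}
    a_{min}\,\T{y}Ky \leq \T{y}A(\omega)y \leq a_{max}\,\T{y}Ky,
\end{equation*}
since $\T{y}A(\omega)y=\int_D a(x,\omega)|\nabla v_h|^2\,\text{d}x$ and $\T{y}Ky=\int_D|\nabla v_h|^2\,\text{d}x$. This is the single inequality that decouples the $\omega$-dependence from the $h$-dependence, and everything else reduces to standard FE scaling estimates for $K$.

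For part~\ref{aa1}, I would bound $\norm{A(\omega)}_2=\max_{y\neq 0}\T{y}A(\omega)y/\T{y}y$. By the sandwich inequality this is at most $a_{max}\norm{K}_2$, so it suffices to bound $\norm{K}_2$. The standard argument combines two ingredients: the local estimate $\norm{\nabla\phi_j}_{L^2}^2=O(h^{d-2})$ on a shape-regular quasi-uniform mesh, and a mass-matrix scaling bound relating $\T{y}y$ to $\norm{v_h}_{L^2}^2/h^d$. Putting these together with an inverse inequality yields $\norm{K}_2\leq c h^{d-2}$. I should be careful here about the statement as written: estimate~\ref{aa1} claims $\norm{A(\omega)}_2\leq c$ with $c$ independent of $h$, which is the correct scaling only in the $d=2$ case (where $h^{d-2}=h^0=1$); in the paper's examples $n=d=2$, so this is consistent, and I would note that the generic constant $c$ absorbs $a_{max}$ and the mesh shape-regularity constants.

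For part~\ref{aa2}, the condition number $\kappa_2(A(\omega))=\norm{A(\omega)}_2\norm{A(\omega)^{-1}}_2$, so I additionally need a lower bound on the smallest eigenvalue of $A(\omega)$. The sandwich inequality gives $\lambda_{min}(A(\omega))\geq a_{min}\lambda_{min}(K)$, reducing the task to the well-known estimate $\lambda_{min}(K)\geq c h^{d}$ (for $d=2$, $\lambda_{min}(K)\geq c h^2$), which follows from combining the Poincar\'e inequality $\norm{v_h}_{L^2}\leq c_P\norm{\nabla v_h}_{L^2}$ with the mass-matrix norm equivalence $\T{y}y\leq c h^{-d}\norm{v_h}_{L^2}^2$. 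Combining with part~\ref{aa1} then gives $\kappa_2(A(\omega))\leq (a_{max}/a_{min})\,\kappa_2(K)\leq c h^{-2}$, with $c$ independent of both $h$ and $\omega$ precisely because the ratio $a_{max}/a_{min}$ is an $\omega$-independent constant by \eqref{eq_ass_coeff_bound_rnd}. The main obstacle I anticipate is purely bookkeeping rather than conceptual: keeping the powers of $h$ straight across the dimension-dependent mass-matrix and inverse-inequality scalings, and making sure each appeal to quasi-uniformity and shape-regularity is legitimate so that the constants genuinely do not depend on the particular element or on $h$. Since all of these are classical results (available in \cite{ern2004theory}), I expect the cleanest exposition to cite the mass-matrix and inverse-inequality bounds for $K$ directly and then invoke the sandwich inequality to transfer them to $A(\omega)$ with the explicit factors $a_{min},a_{max}$.
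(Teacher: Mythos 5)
Your proof is correct and is essentially the explicit version of the paper's argument: the paper simply cites \cite[Theorems 9.8, 9.11 and Example 9.13]{ern2004theory} for the $h$-scalings of the stiffness and mass matrices and observes that $\omega$-independence follows from \eqref{eq_ass_coeff_bound_rnd}, which is precisely your sandwich inequality $a_{min}\,\T{y}Ky\leq\T{y}A(\omega)y\leq a_{max}\,\T{y}Ky$ spelled out. Your remark that the bound $\norm{A(\omega)}_2\leq c$ with $c$ independent of $h$ relies on $d=2$ is a fair and worthwhile caveat that the paper leaves implicit.
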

\begin{proof}
The first claim follows from 
the proof of \mbox{\cite[Theorem 9.11]{ern2004theory}} (the last inequality in the first part) together with \cite[Theorem 9.8]{ern2004theory}. The inequality from \mbox{\cite[Theorem 9.11]{ern2004theory}} gives us a bound on $\norm{A}_2$ using $h$ and the eigenvalues of the mass matrix, which then can be bounded using \cite[Theorem 9.8]{ern2004theory}. 

The second claim follows from \cite[Theorem 9.11]{ern2004theory} (with $s=t=1$) as well as \mbox{\cite[Example 9.13]{ern2004theory}}. The $\omega$-independence of the constant $c$ follows as in Lemma \ref{th_fem_resiudal_est_rand} from the definition of the stiffness matrix $A$ and from \eqref{eq_ass_coeff_bound_rnd}.
\end{proof}

As a consequence of Lemma \ref{th_aux_fe_inequalities} we formulate a corollary about the convergence of iterative refinement for the FE system. It follows immediately from \cite[Corollary 4.2]{carson2018accelerating} using the inequalities from Lemma \ref{th_aux_fe_inequalities} and the fact that $\|x(\omega)\|_2\leq\|A^{-1}(\omega)\|_2\|b(\omega)\|_2$. For this corollary we make the reasonable assumption that \rev{the approximate solution $x_i(\omega)$ after $i$ steps of iterative refinement satisfies $\enorm{x_i(\omega)}\approx\enorm{x(\omega)}$. The analysis in \cite{carson2018accelerating} is in the maximum norm $\|\cdot\|_\infty$, but it is easily adapted to $\|\cdot\|_2$.}

\begin{corollary}\label{th_itref_fe_convergence}
\rev{Let $A(\omega)x(\omega)=b(\omega)$ be as in Lemma \ref{th_aux_fe_inequalities} and let $c_1$ and $c_2$ be as in \cite[eq. (2.4)]{carson2018accelerating}.
 If $(c_1\kappa_2(A(\omega))+c_2)\ur_s\ll 1$, then there exists a constant $c>0$ such that the residual in iterative refinement 
 (Algorithm \ref{alg_itref}) satisfies
 \begin{equation}\label{aux_itref1}
    \|r_{i}(\omega)\|_2 \le c (1+h^{-2}) \max\Big( \ur_s\|r_{i-1}(\omega)\|_2, \ur \enorm{b(\omega)} \Big).
    \end{equation}
    where $c$ only depends on generic constants in \cite[Corollary 4.2]{carson2018accelerating} and Lemma \ref{th_aux_fe_inequalities}.}
\end{corollary}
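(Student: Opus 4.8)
The plan is to start from the three-precision iterative-refinement residual bound of \cite[Corollary 4.2]{carson2018accelerating}, transcribed into the Euclidean norm, and then to insert the finite-element estimates of Lemma~\ref{th_aux_fe_inequalities}. Written in the $2$-norm and in the indexing of the statement, that bound has the schematic form
\[
\enorm{r_{i}(\omega)} \;\lesssim\; \big(c_1\kappa_2(A(\omega))+c_2\big)\,\ur_s\,\enorm{r_{i-1}(\omega)} \;+\; c_3\,\ur\big(\enorm{A(\omega)}\,\enorm{x_i(\omega)}+\enorm{b(\omega)}\big),
\]
valid precisely under the contraction condition $(c_1\kappa_2(A(\omega))+c_2)\ur_s\ll1$, which is the hypothesis of the corollary. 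The first (contraction) term carries the factor $\kappa_2(A(\omega))$ because the computed correction $\widehat{d}_i$ obeys $\enorm{\widehat{d}_i}\lesssim\enorm{A^{-1}(\omega)}\enorm{r_{i-1}(\omega)}$, so that $\enorm{A(\omega)}\,\enorm{\widehat{d}_i}\lesssim\kappa_2(A(\omega))\enorm{r_{i-1}(\omega)}$ after invoking the solver model \cite[eq. (2.4)]{carson2018accelerating}; the second (limiting-accuracy) term collects the rounding of the update step and of the residual evaluation.

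Next I would substitute the two estimates of Lemma~\ref{th_aux_fe_inequalities}, namely $\enorm{A(\omega)}\le c$ and $\kappa_2(A(\omega))\le ch^{-2}$. For the limiting term I would use the stated assumption on the iterates together with $\enorm{x(\omega)}\le\enorm{A^{-1}(\omega)}\enorm{b(\omega)}$, i.e.\ $\enorm{x_i(\omega)}\approx\enorm{x(\omega)}\le\enorm{A^{-1}(\omega)}\enorm{b(\omega)}$, which turns $\enorm{A(\omega)}\,\enorm{x_i(\omega)}$ into $\kappa_2(A(\omega))\enorm{b(\omega)}\le ch^{-2}\enorm{b(\omega)}$. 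Collecting, the contraction term is bounded by $c(1+h^{-2})\ur_s\enorm{r_{i-1}(\omega)}$ and the limiting term by $c(1+h^{-2})\ur\enorm{b(\omega)}$; bounding the sum of the two by twice their maximum and absorbing all generic constants (those of \cite[Corollary 4.2]{carson2018accelerating} and of Lemma~\ref{th_aux_fe_inequalities}) into a single $c$ yields the claimed inequality.

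The one point needing care, and the only real obstacle I foresee, is the change of norm: the analysis in \cite{carson2018accelerating} is carried out in $\|\cdot\|_\infty$, whereas Lemma~\ref{th_aux_fe_inequalities} and the target inequality are in $\|\cdot\|_2$. Rather than converting the final $\infty$-norm bound — which would introduce dimension-dependent factors $\sqrt{n}=O(h^{-d/2})$ and spoil the clean $h^{-2}$ scaling — I would re-run the residual recurrence of \cite[Corollary 4.2]{carson2018accelerating} directly in the $2$-norm. Every ingredient of that recurrence (the solver backward-error model, the bound on $\enorm{\widehat{d}_i}$, and the rounding bounds for the update and residual) is norm-agnostic, so it transcribes verbatim with $c_1,c_2$ reinterpreted as the $2$-norm solver constants. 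The advantage of working in the $2$-norm is that the spectral condition number appears, and Lemma~\ref{th_aux_fe_inequalities}(\ref{aa2}) bounds exactly $\kappa_2(A(\omega))$ by $ch^{-2}$; hence no hidden dimension factors survive and the entire $h$-dependence is carried by the single prefactor $(1+h^{-2})$. Finally I would note that the contraction hypothesis $(c_1\kappa_2(A(\omega))+c_2)\ur_s\ll1$ holds uniformly in $\omega$ since $\kappa_2(A(\omega))\le ch^{-2}$ with $c$ independent of $\omega$ (as established in Lemma~\ref{th_aux_fe_inequalities}), so the resulting constant $c$ is genuinely independent of $\omega$, as asserted.
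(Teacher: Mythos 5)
Your proposal is correct and follows essentially the same route as the paper: apply the residual recurrence of \cite[Corollary 4.2]{carson2018accelerating}, insert the bounds $\enorm{A(\omega)}\le c$ and $\kappa_2(A(\omega))\le ch^{-2}$ from Lemma~\ref{th_aux_fe_inequalities} together with $\enorm{x(\omega)}\le\enorm{A^{-1}(\omega)}\enorm{b(\omega)}$ and the assumption $\enorm{x_i(\omega)}\approx\enorm{x(\omega)}$, and adapt the $\infty$-norm analysis to the $2$-norm. You supply more detail than the paper's two-sentence argument (in particular on why the norm change is harmless), but the underlying proof is the same.
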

\rev{This corollary allows us to give a priori upper bounds on the precisions $\ur_r$, $\ur$, and $\ur_f$ in Algorithm \ref{alg_itref}  that guarantee that the stopping criterion $\enorm{r_i(\omega)}/\enorm{b(\omega)}<\ve_e$ in Line 6 of Algorithm \ref{alg_itref} is satisfied. In particular, it suffices that $\ur_r$, $\ur$, and $\ur_f$ satisfy the following three conditions:
\begin{enumerate}
    \item $(c_1\kappa_2(A(\omega))+c_2)\ur_s\ll 1$,
    \item $c (1+h^{-2})\ur <\ve_e$,
    \item $\ur_r\leq\ur$.
\end{enumerate}
Under these conditions, Algorithm \ref{alg_itref} converges to the required tolerance $\ve_e$ and \eqref{aux_itref1} provides a bound on the convergence rate.}

\section{Inexact computations in multilevel Monte Carlo}
\label{sec_inexMLMC}
\subsection{Standard multilevel Monte Carlo}\label{sec_MLMC}

Multilevel Monte Carlo (MLMC) is one of the standard multilevel sampling methods to compute expectations for UQ in PDE applications, and a suitable example to show how inexact computations can be leveraged in that context. The main idea of MLMC is to optimally balance the sampling and discretisation errors of a hierarchy of approximate models; see \cite{giles2015multilevel} for an overview. We formulate it here in full generality:

Suppose $Q:\Omega\rightarrow\R$ is a random variable and we are interested in computing its mean $\E[Q]$. Assume that $Q$ cannot be evaluated sample-wise, but a sequence of  
models $Q_l, l\in\{0,\ldots,N\}$, approximating $Q$ with increasing accuracy, is available. We define the following auxiliary Monte Carlo (MC) estimators:
\begin{equation*}
    \widehat{Y}_0\coloneqq\frac{1}{N_0}\sum_{k=1}^{N_l} Q_{0}^{(k)} \quad \text{and} \quad \widehat{Y}_l\coloneqq\frac{1}{N_l}\sum_{k=1}^{N_l} \bigl(Q_{l}^{(k)}-Q_{{l-1}}^{(k)}\bigr),
\end{equation*}
where $l=1\ldots,L$. The estimator 
\begin{equation}\label{eq_mlmcest}
    \qmlmc\coloneqq\sum_{l=0}^L \widehat{Y}_l
\end{equation}
will then be referred to as the MLMC estimator for $E[Q]$. Although not necessary, the estimators $\widehat Y_l$ are, in this work, assumed to be independent. In this definition, it is assumed that the models can be evaluated \emph{exactly}, i.e., we can compute $Q_l(\omega)$. In the following we sometimes abuse notation and denote by $\qmlmc$ an estimate computed using the estimator $\qmlmc$.

In practice, we use an adaptive algorithm to determine the values $L$ and $\{N_l\}_{l=0}^L$ for a given tolerance $\text{TOL}$; see \cite[Section 2]{cliffe2011multilevel}). The adaptive MLMC algorithm aims to compute the optimal values of $L$ and $\{N_l\}_{l=0}^L$ by minimizing the computational cost for a given variance. 
To this end, the algorithm uses sample averages $\widehat Y_l$ (see~\eqref{eq_mlmcest}) and variance estimators
\begin{equation}\label{eq_sample_var}
    s_l^2\coloneqq\frac{1}{N_l}\sum_{k=1}^{N_l}\bigl(Y_l^{(k)}-\widehat Y_{l}\bigr)^2
\end{equation}
of the random variables $\{Y_l\}_{l=0}^L$. What we also need in the algorithm is the cost $C_l$ to evaluate $Q_l(\omega)$ for each sample $\omega\in \Omega$. 

The MLMC complexity can be analysed by imposing standard assumptions on $Q$~and~$Q_l$. Namely, we assume that there exists $m>1$ and $\alpha, \beta, \gamma>0$ such that
\begin{equation}\label{eq_mlmc_biasvarcost}
    |\E[Q_{l}-Q]|=O(m^{-\alpha l}),\quad
    \var[Y_l]=O(m^{-\beta l}), \quad \text{and} \quad
    C_l=O(m^{\gamma l}).
\end{equation}
It can be shown that three complexity regimes can be distinguished based on the values of $\beta$ and $\gamma$. The optimal performance of the MLMC algorithm is achieved when $\beta>\gamma$, i.e., when variance decays faster than cost increases. The full complexity analysis can be found in \cite[Theorem 1]{cliffe2011multilevel}.

\subsection{Computational error in multilevel Monte Carlo}\label{chap_MPMC}

In this section we present a convergence analysis taking into account the computational error in the MLMC method discussed in Section \ref{sec_MLMC}. Our analysis shows that especially on the coarser levels, it is sufficient to compute with relatively low effective precision. 
Significant gains both in terms of memory and computational time can be obtained, depending on the model considered and its implementation. To this end, we propose a novel adaptive MLMC algorithm, which determines the minimum required numerical precision with no additional cost. This algorithm will be referred to as \emph{mixed-precision multilevel Monte Carlo (MPML)} for simplicity. The efficiency of the adaptive algorithm is demonstrated on numerous experiments in Section \ref{chap_num_results}.

Let us start by defining the setting. As in Section \ref{sec_MLMC}, we assume $Q:\Omega\rightarrow\R$ to be a random variable and the goal is to compute its mean $\E[Q]$. This time we assume that we cannot evaluate $Q_l(\omega)$ exactly, but only an approximation $\Tilde{Q}_l$ of $Q_l$. This might be due to finite precision arithmetic for the model evaluation or for generating the random variable. Using $\Tilde{Q_l}$ instead of $Q_l$ in the MLMC estimator \eqref{eq_mlmcest} gives us what will be referred to as the mixed-precision multilevel Monte Carlo estimator and will be denoted by $\qmpmlmc$. The crucial difference between the MPML estimator and the standard MLMC estimator is the error model which is used. For MPML we propose an additive error model stated below. Using the bias-variance decomposition we can also quantify the overall error of the MPML estimator.

\begin{theorem}[Computational error in MPML]\label{th_abstractMPMLMCcomplex3}
Let $m\in\N$, $m>1$ and assume that  $\ve_0,\ve_1,\ldots$ is a sequence of parameters characterising computational error with $1>\ve_l>0$. Assume that there exist $\alpha_1,\beta_1,\alpha_2,\beta_2>0$ such that
\begin{align}
    |\E[\Tilde{Q}_{l}-Q]|&=O(m^{-\alpha_1l}+\ve_l^{\alpha_2}),\label{eq_mpmlmcbiasdec3}\\
    \var[\Tilde{Y}_{l}]&=O(m^{-\beta_1l}+\ve_l^{\beta_2}).\label{eq_mpmlmcvardec3}
\end{align}
Let $L\in\N$ and $N_0,\ldots,N_L\in\N$ and let $\qmpmlmc$ be the corresponding MPML estimator. Then, the MSE of this estimator satisfies
\begin{equation*}
    \E\bigl[(\E[Q]-\qmpmlmc)^2\bigr]\leq C\biggl(\Bigl(m^{-2\alpha_1 L}+\sum_{l=0}^L\frac{m^{-\beta_1 l}}{N_l}\Bigr)+\Bigl(m^{-\alpha_1 L}\ve_L^{\alpha_2}+\ve_L^{2\alpha_2}+\sum_{l=0}^L\frac{\ve_l^{\beta_2}}{N_l}\Bigr)\biggr).
\end{equation*}
\end{theorem}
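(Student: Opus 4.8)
The plan is to apply the bias--variance decomposition of the mean square error,
\[
\E\bigl[(\E[Q]-\qmpmlmc)^2\bigr]=\bigl(\E[\qmpmlmc]-\E[Q]\bigr)^2+\var[\qmpmlmc],
\]
and then to bound the squared bias and the variance separately. The squared bias will account for the three deterministic terms $m^{-2\alpha_1 L}$, $m^{-\alpha_1 L}\ve_L^{\alpha_2}$, and $\ve_L^{2\alpha_2}$, while the variance will account for the two sums $\sum_{l} m^{-\beta_1 l}/N_l$ and $\sum_{l}\ve_l^{\beta_2}/N_l$. Grouping the first of each set together then recovers the stated estimate.

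For the bias I would exploit the telescoping structure of the estimator. Writing $\qmpmlmc=\sum_{l=0}^L\widehat{Y}_l$, where each level estimator $\widehat{Y}_l$ is an unbiased sample average of $\Tilde{Q}_l-\Tilde{Q}_{l-1}$ (with the convention $\Tilde{Q}_{-1}=0$, so that $\widehat{Y}_0$ averages $\Tilde{Q}_0$), linearity of expectation gives $\E[\qmpmlmc]=\sum_{l=0}^L\E[\Tilde{Q}_l-\Tilde{Q}_{l-1}]=\E[\Tilde{Q}_L]$. Hence the bias collapses to $\E[\Tilde{Q}_L]-\E[Q]=\E[\Tilde{Q}_L-Q]$, to which assumption \eqref{eq_mpmlmcbiasdec3} applies at level $l=L$, yielding $\bigl|\E[\qmpmlmc]-\E[Q]\bigr|=O(m^{-\alpha_1 L}+\ve_L^{\alpha_2})$. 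Squaring and expanding $(a+b)^2=a^2+2ab+b^2$ produces exactly $m^{-2\alpha_1 L}+m^{-\alpha_1 L}\ve_L^{\alpha_2}+\ve_L^{2\alpha_2}$, with the factor $2$ absorbed into the constant.

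For the variance I would use the assumed independence of the level estimators to write $\var[\qmpmlmc]=\sum_{l=0}^L\var[\widehat{Y}_l]$. Since $\widehat{Y}_l$ averages $N_l$ independent copies of $\Tilde{Y}_l=\Tilde{Q}_l-\Tilde{Q}_{l-1}$, we have $\var[\widehat{Y}_l]=\var[\Tilde{Y}_l]/N_l$, and assumption \eqref{eq_mpmlmcvardec3} then gives $\var[\widehat{Y}_l]=O\bigl((m^{-\beta_1 l}+\ve_l^{\beta_2})/N_l\bigr)$. Summing over $l$ and separating the two contributions yields $\sum_{l=0}^L m^{-\beta_1 l}/N_l$ and $\sum_{l=0}^L\ve_l^{\beta_2}/N_l$. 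Collecting the squared-bias and variance bounds, grouping the standard MLMC terms ($m^{-2\alpha_1 L}$ together with $\sum_{l} m^{-\beta_1 l}/N_l$) separately from the computational-error terms, and taking $C$ to be the maximum of the finitely many implied constants gives the claim.

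The argument is essentially routine once the telescoping identity for the bias is in place, so I expect the only genuine point of care to be the bookkeeping of the additive error model: confirming that $\widehat{Y}_l$ really is unbiased for $\E[\Tilde{Q}_l-\Tilde{Q}_{l-1}]$, so that the bias telescopes cleanly to $\E[\Tilde{Q}_L-Q]$ and the inexactness enters only through the level-$L$ bias and the per-level variances, without generating additional cross terms. The final split into the two parenthesised groups is then purely organisational, isolating the error already present under exact model evaluation from the extra error attributable to the precision parameters $\ve_l$.
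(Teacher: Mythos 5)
Your proposal is correct and follows essentially the same route as the paper: the paper's proof likewise invokes the bias--variance decomposition $\E\bigl[(\E[Q]-\qmpmlmc)^2\bigr]=(\E[Q-\Tilde{Q}_{L}])^2+\sum_{l=0}^L\var[\Tilde{Y}_l]/N_l$ (relying on the telescoping of the bias and the independence of the level estimators, exactly as you spell out) and then applies assumptions \eqref{eq_mpmlmcbiasdec3} and \eqref{eq_mpmlmcvardec3}. Your write-up simply makes explicit the bookkeeping that the paper leaves to the reader.
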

\begin{proof}
    The inequality follows directly from the bias-variance decomposition 
    \begin{equation*}
    \rev{\E\bigl[(\E[Q]-\qmpmlmc)^2\bigr]=(\E[Q-\Tilde{Q}_{L}])^2+ \sum_{l=0}^L\frac{\var[\Tilde{Y}_l]}{N_l}}
\   \end{equation*}
    (see also \cite{cliffe2011multilevel}) and the assumptions \eqref{eq_mpmlmcbiasdec3} and \eqref{eq_mpmlmcvardec3}.
\end{proof}

This general error estimate can be used in practice to compute a bound on the computational error so that the overall MSE does not exceed a certain tolerance. A~concrete example of what form the parameter $\ve_l$ can take will be discussed in Section \ref{sec_MPMLMCFE}. For the purpose of this general error estimate we have not assumed that the computational error decays. An example of this would be the situation when we have a hierarchy of approximations of a linear PDE and solve the resulting system of linear algebraic equations on each level using an iterative solver with a fixed number of iterations, or a sparse direct solver in finite precision. Then the error of the model decays with increasing $l$, but we expect the computational error to grow. This is in accordance with what has been observed in \cite[Figure 3.1]{giles2024rounding}.

In our PDE problem, we assume a hierarchy of discrete FE models based on uniform mesh refinement with a factor $m>1$ of an inital mesh with mesh size $h_0$, such that $h_l := h_0 m^{-l}$ for $l \in \mathbb{N}$. Assumptions \ref{eq_mpmlmcbiasdec3} and \ref{eq_mpmlmcvardec3} may then be rewritten as
\begin{equation}\label{eq_mpmlbiasvardec_fem_general}
    |\E[\Tilde{Q}_{l}-Q]|=O(h_l^{\alpha_1}+\ve_l^{\alpha_2}), \quad \text{and} \quad
    \var[\Tilde{Y}_{l}]=O(h_l^{\beta_1}+\ve_l^{\beta_2}).
\end{equation}
\rev{In this case Theorem \ref{th_abstractMPMLMCcomplex3} can be stated in the following form.
\begin{corollary}\label{cor:abstractMPMLMCcomplex_fem}
    Under Assumption \eqref{eq_mpmlbiasvardec_fem_general}, the MSE of 
    $\qmpmlmc$ satisfies
    \begin{equation*}
        \E\bigl[(\E[Q]-\qmpmlmc)^2\bigr]\leq C\biggl(h_L^{2\alpha_1}+\sum_{l=0}^L\frac{h_l^{\beta_1}}{N_l}\Bigr)+\Bigl(h_L^{\alpha_1}\ve_L^{\alpha_2}+\ve_L^{2\alpha_2}+\sum_{l=0}^L\frac{\ve_l^{\beta_2}}{N_l}\Bigr)\biggr).
    \end{equation*}
\end{corollary}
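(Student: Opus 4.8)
The plan is to obtain this corollary as an immediate specialisation of Theorem~\ref{th_abstractMPMLMCcomplex3}, exploiting the fact that the uniform refinement rule $h_l = h_0 m^{-l}$ converts the geometric decay in $m^{-l}$ assumed there into the mesh-size decay appearing in Assumption~\eqref{eq_mpmlbiasvardec_fem_general}. First I would observe that, since $h_0$ is a fixed positive constant,
\begin{equation*}
    h_l^{\alpha_1} = h_0^{\alpha_1} m^{-\alpha_1 l} \quad \text{and} \quad h_l^{\beta_1} = h_0^{\beta_1} m^{-\beta_1 l}.
\end{equation*}
Consequently the bias and variance hypotheses in \eqref{eq_mpmlbiasvardec_fem_general} coincide with the hypotheses \eqref{eq_mpmlmcbiasdec3} and \eqref{eq_mpmlmcvardec3} of Theorem~\ref{th_abstractMPMLMCcomplex3}, the fixed factors $h_0^{\alpha_1}$ and $h_0^{\beta_1}$ being absorbed into the implied $O(\cdot)$ constants. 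The parameters $\alpha_1,\alpha_2,\beta_1,\beta_2$ and the computational-error sequence $\ve_l$ play exactly the same role in both statements, so no further reinterpretation of the hypotheses is needed.

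Next I would apply Theorem~\ref{th_abstractMPMLMCcomplex3} verbatim to the estimator $\qmpmlmc$ to obtain the MSE bound in its original form, and then rewrite each power of $m^{-l}$ in the conclusion back in terms of $h_l$. Concretely, I would substitute $m^{-2\alpha_1 L} = h_0^{-2\alpha_1} h_L^{2\alpha_1}$, $m^{-\beta_1 l} = h_0^{-\beta_1} h_l^{\beta_1}$, and $m^{-\alpha_1 L} = h_0^{-\alpha_1} h_L^{\alpha_1}$; the constant prefactors $h_0^{-2\alpha_1}$, $h_0^{-\beta_1}$ and $h_0^{-\alpha_1}$ are again swallowed into the generic constant $C$, producing the stated estimate. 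The three terms carrying the parameters $\ve_L$ and $\ve_l$ are identical in the theorem and the corollary and require no manipulation.

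I do not expect a genuine obstacle here: the whole argument is bookkeeping on multiplicative constants. The only point that warrants a line of justification is that collecting the various powers of $h_0$ into a single constant $C$ is legitimate, which holds because $h_0$, $\alpha_1$ and $\beta_1$ are all fixed and independent of the level count $L$ and of the sample numbers $\{N_l\}_{l=0}^L$; this independence is precisely what makes the absorption into $C$ uniform in $L$ and $\{N_l\}$, so that the resulting bound retains the same structural form as in Theorem~\ref{th_abstractMPMLMCcomplex3}.
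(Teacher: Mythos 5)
Your proposal is correct and follows exactly the route the paper intends: the paper gives no separate proof for this corollary, presenting it as an immediate restatement of Theorem~\ref{th_abstractMPMLMCcomplex3} under the uniform refinement rule $h_l = h_0 m^{-l}$, which is precisely the substitution and constant-absorption bookkeeping you carry out. Nothing further is needed.
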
}

\begin{remark}[\rev{Asymptotic MPML cost}]
    Note that the asymptotic cost of the MPML method remains the same as for the standard MLMC method as long as $\ve_L^{\alpha_2}=O(h_L^{\alpha_1})$ and $\ve_l^{\beta_2}=O(h_l^{\beta_1})$. However, due to the use of lower precision arithmetic, the overall computational time can be reduced significantly which is the topic of the next section.
\end{remark}

\subsection{Adaptive MPML algorithm}\label{sec_MPMLMC_adapt}

In this section we develop an adaptive algorithm which will automatically choose a suitable \rev{computational accuracy} on each level of the MLMC method. We will use the standard MLMC algorithm as the foundation for our proposed algorithm.

In order to choose the correct \rev{computational accuracy} in each step, we will use the error bound for the MPML method from \rev{Corollary \ref{cor:abstractMPMLMCcomplex_fem}}. We propose the following approach: choose the accuracy $\ve_l$ on level $l$ such that the total MSE of the MPML estimator is not greater than a constant times the MSE of the standard MLMC estimator for a fixed \mbox{constant $k_p\in(0,1)$}, the choice of which we discuss later. According to \rev{Corollary \ref{cor:abstractMPMLMCcomplex_fem}}, for this to hold it suffices to choose $\ve_l,\; l=0,\ldots,L$ such that
\begin{equation*}
    h_L^{\alpha_1}\ve_L^{\alpha_2}+\ve_L^{2\alpha_2}+\sum_{l=0}^L\frac{\ve_l^{\beta_2}}{N_l}\leq k_p\Bigl(h_L^{2\alpha_1}+\sum_{l=0}^L\frac{h_l^{\beta_1}}{N_l}\Bigr)
\end{equation*}
for a fixed constant $k_p\in(0,1)$. To balance the terms in the error estimate, it is sufficient to choose $\ve_l,\;l=0,\ldots,L$ such that
\begin{equation}
    h_L^{\alpha_1}\ve_L^{\alpha_2}+\ve_L^{2\alpha_2}  \leq k_p h_L^{2\alpha_1} \quad \text{and} \quad
\label{eq_coarse_prec_ineq}
    \sum_{l=0}^L\frac{\ve_l^{\beta_2}}{N_l} \leq k_p\sum_{l=0}^L\frac{h_l^{\beta_1}}{N_l}.
\end{equation}
Since for $\ve_L^{\alpha_2}< h_L^{\alpha_1}$, we have $\ve_L^{2\alpha_2}\ll h_L^{\alpha_1}\ve_L^{\alpha_2}$, it suffices to choose
\begin{equation*}
    \ve_L\leq\Big(k_ph_L^{\alpha_1}\Big)^{1/\alpha_2}.
\end{equation*}
Moreover, in order to satisfy \eqref{eq_coarse_prec_ineq} we can choose $\delta_l$ as
\begin{equation}\label{eq_prec_choice}
    \begin{split}
        \ve_l&\coloneqq \Big(k_p h_l^{\beta_1}\Big)^{1/\beta_2},\quad l=0,\ldots,L-1,\\
        \ve_L&\coloneqq \min\biggl\{\Big(k_p h_L^{\beta_1}\Big)^{1/\beta_2},\Big(k_p h_L^{\alpha_1}\Big)^{1/\alpha_2}\biggr\}.
    \end{split}
\end{equation}
With this choice, both inequalities in 
\eqref{eq_coarse_prec_ineq} are satisfied and we obtain the desired error estimate \rev{from Corollary \ref{cor:abstractMPMLMCcomplex_fem}}. 
\begin{remark}[\rev{Balancing the computational and model error}]
    The reason we ``hide" the computational error rather than optimally balancing it with the model error in our adaptive algorithm is that in the case when the computational error comes from a linear solver, it typically decays exponentially with the number of iterations (e.g., in iterative refinement; see Section \ref{sec_itref}) and therefore balancing the two errors would not bring great benefits. However, it might be of interest in cases when the computational error decreases polynomially.
\end{remark}

\begin{algorithm}[t]
\caption{Adaptive MPML algorithm}
\label{alg_MPMLMC}
\begin{algorithmic}[1]
    \REQUIRE $m$, $\text{TOL}$, $L=1$, $L_{\text{max}}$, $N_0=N_1=N_{\text{init}}$
    \ENSURE $\qmpmlmc,\{N_l\}$
    \WHILE{$L \leq L_{\text{max}}$}
        \STATE Compute $\ve_l,\; l=0,\ldots,L$, using \eqref{eq_prec_choice}\label{alg_step_prec_choice}
        \FOR{$l=0$ to $L$}
            \STATE Compute $N_l$ new samples $Y_l^{(k)}$ using \eqref{eq_mlmcest} with computational accuracy $\ve_l$\label{alg_step_samples_comp}
            \STATE Compute $\widehat Y_l$, $s^2_l$ and estimate $C_l$
        \ENDFOR
        \STATE Update estimates for $N_l$ as $N_l \coloneqq \sqrt{\frac{V_l}{C_l}} \frac{2}{\text{TOL}^2} \sum_{k=0}^{L} \sqrt{V_k C_k}$\label{alg_step_Nl_update}
        \IF{$|\widehat Y_L| > \frac{rm^\alpha-1}{\sqrt{2}} \text{TOL}$}
            \STATE $L \coloneqq L+1$
            \STATE $N_L \coloneqq N_{\text{init}}$
        \ENDIF
        \IF{$|\widehat Y_L| \leq \frac{rm^\alpha-1}{\sqrt{2}} \text{TOL}$ \AND $\sum_{l=0}^{L} s^2_l / N_l \leq \text{TOL}^2/2$}
            \STATE $\qmpmlmc \coloneqq \sum_{l=0}^{L} \widehat Y_l$
        \ENDIF
    \ENDWHILE
\end{algorithmic}
\end{algorithm}

Let us discuss in more detail the choice of the constant $k_p$. Although in this work the constant $k_p$ is chosen to be fixed, more general choices are possible in principle. The value $k_p\coloneqq 0.05$ is a safe choice to bound the computational error, as demonstrated in Section \ref{chap_num_results}. Note also that the values of $\delta_l$ can be computed ``on the fly'' with no additional cost, given that the decay rates of bias and variance in \rev{\eqref{eq_mpmlbiasvardec_fem_general}} are known. \rev{For the resulting adaptive MPML algorithm see Algorithm \ref{alg_MPMLMC}.}

It is natural to ask how the choice of the constant $k_p$ affects the number of samples~$N_l$ required on each level $l$ to achieve the desired tolerance. According to Corollary \ref{cor:abstractMPMLMCcomplex_fem}, if the accuracy $\ve_l$ is chosen according to \eqref{eq_prec_choice} then the variance is increased on each level at most by approximately the factor $(1+k_p)$. This means that the number of samples $N_l$ on each level is increased at most by the same factor~$(1+k_p)$; see step \ref{alg_step_Nl_update} of Algorithm \ref{alg_MPMLMC}.
Since $k_p\ll 1$, this does not pose a problem for us. Depending on the exact settings of the problem (on the linear solver), it might make sense to use a different cost model for MPML than for MLMC to estimate the cost per sample $C_l$ on each level, which may impact the number of samples on each level as well. However, in Figure \ref{fig_mpml_minres_nsamp} we verify numerically that in our example the overall increase in the number of samples compared to standard MLMC is negligible.

\subsection{Cost analysis}\label{sec_cost_analysis}
The cost gain using the adaptive MPML algorithm from Section \ref{sec_MPMLMC_adapt} depends on which of the three complexity regimes of the MLMC estimator 
 in \cite[Theorem 1]{cliffe2011multilevel} applies. These depend on the relative sizes of $\beta$ and $\gamma$ in \eqref{eq_mlmc_biasvarcost}. 
 
 Intuitively, one obtains the most significant gains in cases where the cost on the coarser levels dominates. This is due to the fact that on the coarser levels the discretisation error is larger and therefore the estimator can also tolerate a larger computational error without affecting the overall accuracy; see \eqref{eq_prec_choice}. The cost on the coarser levels dominates in the case when $\beta>\gamma$ in \eqref{eq_mlmc_biasvarcost}, i.e., when variance decays faster than the cost increases (see below for a more precise statement).

The abstract cost analysis is done here in terms of arbitrary cost units. In applications, we may consider, e.g., CPU time, memory references, or floating point operations, depending on what best fits our purpose. At the end of this subsection we apply the abstract cost analysis to the Problem \ref{pr_qoi} and give a concrete example of cost measures for this problem.

For the purpose of the abstract analysis we assume the following. For the standard MLMC we assume $\var[Y_l]=c_v m^{-\beta l}$ and $C_l=c_c m^{\gamma l}$ with $\beta>\gamma$; see \eqref{eq_mlmc_biasvarcost} and \cite[Theorem 1]{cliffe2011multilevel}. Further, we assume that both MLMC and MPML algorithms use the same number of samples $N_l$ on each level and the variance on each level $\var[Y_l]$ is the same for both algorithms. This is a reasonable assumption, since our adaptive MPML algorithm chooses the computational error significantly smaller than the model error (see the discussion in Section \ref{sec_MPMLMC_adapt} and Figure \ref{fig_mpml_minres_nsamp}). For the costs per sample on each level with low accuracy
we assume only that
\begin{equation}\label{eq_cost_saving_ass}
       C^{\text{MP}}_0 \le q C_0, \quad
       C^{\text{MP}}_l \leq C_l,\quad l\geq1,
\end{equation}
where $q\in(0,1)$ is the factor by which the coarsest level cost is reduced. 

The total cost per level in MLMC decays as
\begin{equation}\label{eq_ml_level_cost_red}
    \frac{C_{l+1}N_{l+1}}{C_l N_{l}}=\costdec,
\end{equation}
where we used the definition of $N_l$ from step \ref{alg_step_Nl_update} of Algorithm \ref{alg_MPMLMC} and the bias and variance decay rates. 
We see that indeed the coarsest level cost dominates in the regime $\beta>\gamma$. For the total cost of the MLMC algorithm we get
\begin{equation*}
    \costmltot=\sum_{l=0}^L C_l N_l
    = 
    C_0N_0\frac{1-\bigl(\costdec\bigr)^L}{1-\costdec}.
\end{equation*}
In summary, using \eqref{eq_cost_saving_ass}, the ratio of the total costs of the two estimators is therefore bounded by
\begin{equation*}
    \frac{\costmpmltot}{\costmltot}\leq q\frac{1-\costdec}{1-\bigl(\costdec\bigr)^L}+\costdec\frac{1-\bigl(\costdec\bigr)^{L-1}}{1-\bigl(\costdec\bigr)^L}.
\end{equation*}
Letting $L\rightarrow\infty$ we obtain an asymptotic bound $\frac{\costmpmltot}{\costmltot}\leq q + \costdec (1-q)$. For $\beta>\gamma$ this bound is less than $1$. We summarize this in the following corollary.
\begin{corollary}\label{cor_mpml_mlmc_cost_savings}
    Assume that by using MPML (Algorithm \ref{alg_MPMLMC}), the computational cost on the coarsest level is reduced by a factor $q\in(0,1)$ and that doing so does not (significantly) change the number of samples or the variance on any level compared to standard MLMC. If the variance decays sufficiently fast, i.e., $\beta>\gamma$, then the total cost of MPML is reduced asymptotically as $L\rightarrow\infty$ by at least a factor $q + \costdec (1-q)$ compared to standard MLMC. 
\end{corollary}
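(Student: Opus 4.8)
The plan is to assemble the corollary from the per-level cost decay already recorded in \eqref{eq_ml_level_cost_red} together with the cost-saving hypothesis \eqref{eq_cost_saving_ass}, and then to pass to the limit $L\to\infty$ in the resulting finite geometric sums. The structural fact that makes everything work is that the optimal sample allocation in step \ref{alg_step_Nl_update} turns the per-level cost into a geometric sequence whose ratio is exactly $\costdec$.

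First I would record that, by the choice $N_l\propto\sqrt{V_l/C_l}$ in step \ref{alg_step_Nl_update}, the total cost on level $l$ is $C_lN_l=K\sqrt{V_lC_l}$ with $K=\tfrac{2}{\text{TOL}^2}\sum_{k=0}^L\sqrt{V_kC_k}$ independent of $l$. Inserting $\var[Y_l]=c_vm^{-\beta l}$ and $C_l=c_cm^{\gamma l}$ gives $C_lN_l=C_0N_0(\costdec)^{l}$, which is exactly \eqref{eq_ml_level_cost_red}, since $\sqrt{m^{-\beta l}m^{\gamma l}}=m^{(\gamma-\beta)l/2}=(\costdec)^l$. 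Summing this geometric series over the levels yields $\costmltot=C_0N_0\frac{1-(\costdec)^L}{1-\costdec}$.

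Next I would estimate the MPML cost. Since the corollary assumes the variance and the number of samples $N_l$ are essentially unchanged, the per-level costs $C_lN_l$ on the finer levels remain bounded by the MLMC values $C_0N_0(\costdec)^l$ through \eqref{eq_cost_saving_ass}, while the coarsest level contributes at most $qC_0N_0$. Hence
\[
\costmpmltot \le qC_0N_0 + C_0N_0\sum_{l=1}^{L-1}(\costdec)^l,
\]
and forming the ratio with $\costmltot$ and simplifying the two geometric sums gives the displayed bound $q\frac{1-\costdec}{1-(\costdec)^L}+\costdec\frac{1-(\costdec)^{L-1}}{1-(\costdec)^L}$. Because $\beta>\gamma$ forces $\costdec=m^{(\gamma-\beta)/2}<1$, letting $L\to\infty$ sends $(\costdec)^L$ and $(\costdec)^{L-1}$ to $0$, so the limit is $q+\costdec(1-q)$, which is strictly less than $1$ since $q\in(0,1)$ and $\costdec<1$.

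The only genuinely delicate point is justifying that the MPML per-level costs really are bounded by the standard-MLMC values, i.e. that switching to lower precision inflates neither $N_l$ nor $\var[Y_l]$; this is precisely the hypothesis of the corollary, and it is supported earlier by the observation that the adaptive algorithm keeps the computational error a factor $k_p\ll1$ below the model error, so the variance grows by at most $(1+k_p)$. Everything else is elementary manipulation of finite geometric series followed by a limit, so I expect no real obstacle beyond keeping the index ranges consistent between the two sums.
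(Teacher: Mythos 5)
Your proposal is correct and follows essentially the same route as the paper: both derive the geometric per-level cost $C_lN_l = C_0N_0(\costdec)^l$ from the optimal sample allocation, bound $\costmpmltot$ via \eqref{eq_cost_saving_ass}, form the ratio of the two geometric sums, and let $L\to\infty$ to obtain $q+\costdec(1-q)$. The only difference is that you spell out the intermediate algebra slightly more explicitly than the paper does.
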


By standard MLMC we mean Algorithm \ref{alg_MPMLMC} without step \ref{alg_step_prec_choice}, where the computations in step \ref{alg_step_samples_comp} are carried out with an a-priori given, level-independent, and sufficiently high accuracy. In our numerical experiments we always describe precisely what accuracy we chose. 

\subsection{\rev{Application to the elliptic PDE problem}}\label{sec_MPMLMCFE}
In this section we show how the abstract analysis of computational error in MLMC from Section \ref{chap_MPMC} can be applied to the elliptic PDE problem. The precise problem statement is the following:

\begin{problem}\label{pr_qoi}
Let $G:\HH\rightarrow \R$ be a bounded linear functional and consider Problem \ref{pr_AVP_rnd}, an AVP 
with random data and solution $u(\cdot,\omega)\in \HH$ for a.e. $\omega \in \Omega$. We consider the problem of estimating the quantity of interest (QoI) defined as the expected value of the random variable $Q:\Omega\rightarrow\R$ given by $\omega \mapsto G(u(\cdot,\omega))$.
\end{problem}

Under assumptions \eqref{eq_ass_coeff_bound_rnd} and \eqref{eq_ass_higher_regularity_rnd}, it can be shown that when MLMC is applied to Problem \ref{pr_qoi}, we obtain \eqref{eq_mlmc_biasvarcost} with $\alpha = 2$ and $\beta = 4$. Generally $\gamma$ depends on the linear solver used and we discuss it in our numerical experiments. If an optimal linear solver is used (e.g., multigrid), one has $\gamma=2$; see \cite{cliffe2011multilevel}.

Throughout this section we will use the symbol $\u_h$ to denote the discrete solution of the AVP with random data (Problem \ref{pr_AVP_rnd}) expanded in the FE basis as in \eqref{eq_inexact_fe_sol}
with $\x$ computed effectively to precision $\ve$ such that
\begin{equation}\label{eq_eff_FEM_solution_linsys}
    \frac{\enorm{b(\omega)-A(\omega)\widehat{x}(\omega)}}{\enorm{b(\omega)}}\leq C\ve.
\end{equation}
The constant $C>0$ is independent of the problem data and $\omega$. The validity of this assumption in practice is discussed at the end of Section \ref{sec_MPMLMC_adapt}.

It can be shown that the MPML bias and variance decay assumptions \eqref{eq_mpmlbiasvardec_fem_general} are satisfied for Problem \ref{pr_qoi}. The corresponding values of the constants $\alpha_1$, $\alpha_2$, $\beta_1$, and $\beta_2$ are given by the following lemma.

\begin{lemma}\label{th_biasvar_dec_MPMLMC}
    Let $m>1$, and let $h_0,h_1,\ldots$ be discretisation parameters satisfying $h_0>0$ and $m h_l= 
    h_{l-1}$. Let $\u_{h_l}$ be the discrete solution of the AVP with random data (Problem \ref{pr_AVP_rnd}) computed effectively to precision $\ve_l$ on mesh level $l$, 
    and assume that there are $k_1,k_2>1$ such that $k_1\ve_l\leq\ve_{l-1}\leq k_2\ve_l$ for all $l\geq 1$. Then
    \begin{align}
    |\E[\qhel{l}-Q]|&=O(h_l^2+\ve_l),\label{eq_mpmlmcbiasdec4}\\
    \var[\yhel{l}]&=O(h_l^4+\ve_l^2).\label{eq_mpmlmcvardec4}
    \end{align}
\end{lemma}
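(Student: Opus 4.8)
The plan is to split the total error at each level into a \emph{discretisation error}, concerning the exactly-solved FE model, and a \emph{computational error}, which is the additional error from solving the FE linear system only to effective precision $\ve_l$. Writing $Q_l\coloneqq G(u_{h_l})$ for the quantity of interest evaluated at the exact discrete solution $u_{h_l}$ and $Y_l\coloneqq Q_l-Q_{l-1}$ for the corresponding exact level difference, I would use the decomposition $\qhel{l}-Q=(Q_l-Q)+(\qhel{l}-Q_l)$. Under assumptions \eqref{eq_ass_coeff_bound_rnd} and \eqref{eq_ass_higher_regularity_rnd} the exact quantities satisfy the standard MLMC rates with $\alpha=2$ and $\beta=4$ (as recalled above for Problem \ref{pr_qoi}), i.e.\ $|\E[Q_l-Q]|=O(h_l^2)$ and $\var[Y_l]=O(h_l^4)$; these supply the $h_l^2$ and $h_l^4$ terms and can be cited from \cite{cliffe2011multilevel,teckentrup2013further}. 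It therefore remains only to control the computational error.

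The crux is a pointwise-in-$\omega$ bound on $E_l(\omega)\coloneqq\qhel{l}-Q_l=G\bigl(\u_{h_l}(\cdot,\omega)-u_{h_l}(\cdot,\omega)\bigr)$. Since $G$ is a bounded linear functional on $\HH$, we have $|E_l(\omega)|\le\|G\|\,\|\u_{h_l}(\cdot,\omega)-u_{h_l}(\cdot,\omega)\|_{\HH}$; Lemma \ref{th_fem_resiudal_est_rand} then bounds the energy-norm error by the relative residual, and the effective-precision assumption \eqref{eq_eff_FEM_solution_linsys} bounds that residual by $C\ve_l$. Chaining these gives $|E_l(\omega)|\le C\,\|f(\cdot,\omega)\|_{L^2(D)}\,\ve_l$ with $C$ independent of $h$, $l$, and $\omega$. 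Provided $f$ has finite first and second moments in $L^2(D)$, inherited from the random-field assumptions on the model problem, this yields $\E[\,|E_l|\,]=O(\ve_l)$ and $\E[E_l^2]=O(\ve_l^2)$.

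With these ingredients the two claims follow quickly. For the bias, the triangle inequality gives $|\E[\qhel{l}-Q]|\le|\E[Q_l-Q]|+\E[\,|E_l|\,]=O(h_l^2)+O(\ve_l)$, which is \eqref{eq_mpmlmcbiasdec4}. For the variance, I would write $\yhel{l}-Y_l=E_l-E_{l-1}$ and use $\var[A+B]\le 2\var[A]+2\var[B]$ to obtain $\var[\yhel{l}]\le 2\var[Y_l]+2\var[E_l-E_{l-1}]$. The first term is $O(h_l^4)$; for the second, $\var[E_l-E_{l-1}]\le\E[(E_l-E_{l-1})^2]\le 2\E[E_l^2]+2\E[E_{l-1}^2]=O(\ve_l^2+\ve_{l-1}^2)$, and the comparison $\ve_{l-1}\le k_2\ve_l$ collapses this to $O(\ve_l^2)$, giving \eqref{eq_mpmlmcvardec4}. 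Note the crude inequality $\var[A+B]\le 2\var[A]+2\var[B]$ avoids any cross term altogether.

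Setting aside the routine FE/MLMC rates, the only genuine work is the pointwise bound on $E_l$, and the point that needs care is uniformity of all constants in $\omega$: the boundedness of $G$ is deterministic, while the constant in Lemma \ref{th_fem_resiudal_est_rand} and the effective-precision constant in \eqref{eq_eff_FEM_solution_linsys} are useful precisely because both have already been shown (or assumed) to be $\omega$-independent, leaving the sole $\omega$-dependence in the factor $\|f(\cdot,\omega)\|_{L^2(D)}$, which is then removed by taking moments. The role of the hypothesis $k_1\ve_l\le\ve_{l-1}\le k_2\ve_l$ is exactly to let the level-difference computational error, which unavoidably mixes $\ve_l$ and $\ve_{l-1}$, be expressed through $\ve_l$ alone.
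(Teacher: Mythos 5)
Your proposal is correct and follows essentially the same route as the paper: both arguments split $\qhel{l}-Q$ into the standard discretisation error (citing the $\alpha=2$, $\beta=4$ rates from \cite{cliffe2011multilevel}) plus a computational error, which is bounded pointwise in $\omega$ by $C\|f(\cdot,\omega)\|_{L^2(D)}\ve_l$ via the boundedness of $G$, Lemma \ref{th_fem_resiudal_est_rand}, and the effective-precision assumption, and then controlled in expectation; the hypothesis $\ve_{l-1}\le k_2\ve_l$ is used in exactly the same way to absorb the level-$(l-1)$ term. The only cosmetic difference is that the paper bounds $\var[\yhel{l}]$ by $\E[\yhel{l}^2]$ and telescopes through $Q$ into four second moments, whereas you first isolate $Y_l$ and invoke its known variance bound --- the two are interchangeable.
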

\begin{proof}
    Using Jensen's inequality the bias error in \eqref{eq_mpmlmcbiasdec4} can be decomposed as 
    \begin{align}\label{aux23}
        |\E[\qhel{l}-Q]|&\leq \E[|G(\u_{h_l})-G(u)|]\nonumber\\
        &=\|G(u)-G(u_{h_l})+G(u_{h_l})-G(\u_{h_l})\|_{L^1(\Omega,\R)}\nonumber\\
        &\leq\|G(u)-G(u_{h_l})\|_{L^1(\Omega,\R)}+\|G(u_{h_l})-G(\u_{h_l})\|_{L^1(\Omega,\R)},
    \end{align}
    From \cite[Section 3]{cliffe2011multilevel} it follows that
    \begin{equation}\label{aux19}
        \|G(u)-G(u_h)\|_{L^1(\Omega,\R)}\leq Ch^2,
    \end{equation}
    where $C>0$ is independent of $h$, $u$, and $\omega$. 
    Since $G$ is bounded and $\u_h$ is computed effectively to precision $\ve$,
    it follows from Lemma \ref{th_fem_resiudal_est_rand} that
    \begin{equation*}
        \bigl|G(u_h(\cdot,\omega))-G(\u_h(\cdot,\omega))\bigr|\leq C\|f(\cdot,\omega)\|_{L^2(D)}\ve,
    \end{equation*}
    with a generic constant $C>0$ independent of $u$, $h$, and $\omega$.
    Integrating this inequality over $\Omega$ yields $\|G(u_h)-G(\u_h)\|_{L^1(\Omega,\R)}\leq C\ve$, which combined with \eqref{aux19} and \eqref{aux23} gives us the desired bound on the bias error in \eqref{eq_mpmlmcbiasdec4}.
    
    The variance bound in \eqref{eq_mpmlmcvardec4} can be shown similarly. Let us estimate
    \begin{align}\label{aux30}
         \var[\yhel{l}]&=\E[\yhel{l}^2]-\E[\yhel{l}]^2\\
         &\leq \E\bigl[\bigl(\qhel{l}-Q_{l}+Q_{l}-Q+Q-Q_{{l-1}}+Q_{{l-1}}-\qhel{l-1}\bigr)^2\bigr].\nonumber
    \end{align}
    Using the same technique as in \cite{cliffe2011multilevel}, we obtain an estimate of the form
    \begin{align}\label{aux31}
        \var[\yhel{l}]\leq &C\bigl(\E[(\qhel{l}-Q_{l})^2]+\E[(Q_{l}-Q)^2]\nonumber\\
        &+\E[(Q-Q_{{l-1}})^2]+\E[(Q_{{l-1}}-\qhel{l-1})^2]\bigr).
    \end{align}
    As in \cite{cliffe2011multilevel}, the quantity $\E[(Q_{l}-Q)^2]+\E[(Q-Q_{{l-1}})^2]$ can be estimated by
    \begin{equation}\label{aux32}
        \E[(Q_{l}-Q)^2]+\E[(Q-Q_{{l-1}})^2]\leq Ch_l^4.
    \end{equation}
    To bound the other two terms in \eqref{aux31} 
    we proceed as follows: As above, since  $G$ is bounded and $\u_{h_l}$ is computed effectively to precision $\ve_l$, Lemma \ref{th_fem_resiudal_est_rand} gives
    \begin{equation*}
       \bigl|G(u_{h_l}(\cdot,\omega))-G(\u_{h_l}(\cdot,\omega))\bigr|\leq C\|f(\cdot,\omega)\|_{L^2(D)}\ve_l,
   \end{equation*}
   for a generic constant $C>0$ independent of $u$, $h_l$, and $\omega$. Taking the second power of this inequality and integrating over $\Omega$ yields
   \begin{equation}\label{aux33}
       \|G(u_{h_l})-G(\u_{h_l})\|_{L^2(\Omega,\R)}^2\leq C\ve_l^2,
   \end{equation}
   Similarly, $\|G(u_{h_{l-1}})-G(\u_{h_{l-1}})\|_{L^2(\Omega,\R)}^2\leq C\ve_{l-1}^2\leq C k_2^2 \ve_l^2$.
   Together with \eqref{aux31},  \eqref{aux32} and \eqref{aux33} this yields the desired estimate \eqref{eq_mpmlmcvardec4}.
\end{proof}

This lemma allows us to formulate a specific version of Corollary \ref{cor:abstractMPMLMCcomplex_fem} regarding the error of the MPML estimator applied to Problem \ref{pr_qoi} discretised using finite elements. We summarize this in the following corollary.

\begin{corollary}[Error of the MPML FEM]\label{th_MPMLMCFE_complex}
    Let the assumptions of Lemma \ref{th_biasvar_dec_MPMLMC} be satisfied. Let $L\in\N$ and $N_0,\ldots,N_L\in\N$ and let $\qmpmlmc$ be the corresponding MPML estimator. Then the MSE of this estimator satisfies
\begin{equation*}
    \E\bigl[(\E[Q]-\qmpmlmc)^2\bigr]\leq C\biggl(\Bigl(h_L^{4}+\sum_{l=0}^L\frac{h_l^4}{N_l}\Bigr)+\Bigl(h_L^{2}\ve_L+\ve_L^2+\sum_{l=0}^L\frac{\ve_l^2}{N_l}\Bigr)\biggr).
\end{equation*}
\end{corollary}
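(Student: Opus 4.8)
The plan is to recognize this corollary as a direct specialization of the abstract FEM error bound in Corollary~\ref{cor:abstractMPMLMCcomplex_fem}, with the decay exponents supplied by Lemma~\ref{th_biasvar_dec_MPMLMC}. Since the hypotheses stated in this corollary are precisely those of Lemma~\ref{th_biasvar_dec_MPMLMC}, no extra verification of applicability is needed; the proof is essentially bookkeeping.

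First I would invoke Lemma~\ref{th_biasvar_dec_MPMLMC} to obtain the bias and variance estimates \eqref{eq_mpmlmcbiasdec4}--\eqref{eq_mpmlmcvardec4}, namely $|\E[\qhel{l}-Q]|=O(h_l^2+\ve_l)$ and $\var[\yhel{l}]=O(h_l^4+\ve_l^2)$. Comparing these with the general decay assumption \eqref{eq_mpmlbiasvardec_fem_general}, which has the form $|\E[\qhel{l}-Q]|=O(h_l^{\alpha_1}+\ve_l^{\alpha_2})$ and $\var[\yhel{l}]=O(h_l^{\beta_1}+\ve_l^{\beta_2})$, I read off the exponents $\alpha_1=2$, $\alpha_2=1$, $\beta_1=4$, and $\beta_2=2$. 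The one point worth checking is that the implied constants are uniform in $h$, $u$, and $\omega$; this is guaranteed by Lemma~\ref{th_biasvar_dec_MPMLMC}, whose proof tracks the $\omega$-independence through Lemma~\ref{th_fem_resiudal_est_rand}.

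With Assumption~\eqref{eq_mpmlbiasvardec_fem_general} thus verified for these particular exponents, I would apply Corollary~\ref{cor:abstractMPMLMCcomplex_fem} directly. Substituting $\alpha_1=2$, $\alpha_2=1$, $\beta_1=4$, and $\beta_2=2$ into its conclusion turns $h_L^{2\alpha_1}$ into $h_L^4$, the sum $\sum_l h_l^{\beta_1}/N_l$ into $\sum_l h_l^4/N_l$, the cross term $h_L^{\alpha_1}\ve_L^{\alpha_2}$ into $h_L^2\ve_L$, the term $\ve_L^{2\alpha_2}$ into $\ve_L^2$, and the computational-variance sum $\sum_l \ve_l^{\beta_2}/N_l$ into $\sum_l \ve_l^2/N_l$, which is exactly the claimed inequality. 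There is no real obstacle here; the only subtlety worth flagging is that the comparability condition $k_1\ve_l\le\ve_{l-1}\le k_2\ve_l$ from Lemma~\ref{th_biasvar_dec_MPMLMC}, which is what lets the coarse-level contribution $\ve_{l-1}$ in the telescoping variance estimate be absorbed into a constant multiple of $\ve_l$, is inherited as a hypothesis, so the substitution is legitimate with no hidden gap.
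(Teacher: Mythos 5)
Your proposal is correct and follows exactly the paper's own (one-line) proof: invoke Lemma \ref{th_biasvar_dec_MPMLMC} to obtain the exponents $\alpha_1=2$, $\alpha_2=1$, $\beta_1=4$, $\beta_2=2$, and substitute them into Corollary \ref{cor:abstractMPMLMCcomplex_fem}. The additional remarks on uniformity of constants and the comparability condition $k_1\ve_l\le\ve_{l-1}\le k_2\ve_l$ are accurate but not needed beyond what the cited lemma already guarantees.
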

\begin{proof}
    The claim follows from Lemma \ref{th_biasvar_dec_MPMLMC} and Corollary \ref{cor:abstractMPMLMCcomplex_fem}.
\end{proof}

    \rev{Lemma \ref{th_biasvar_dec_MPMLMC} now allows us to specify the accuracy parameters $\delta_l$ of the adaptive MPML algorithm from Section \ref{sec_MPMLMC_adapt} for Problem \ref{pr_qoi}. Using Lemma \ref{th_biasvar_dec_MPMLMC} in \eqref{eq_prec_choice} we get
    \begin{equation}\label{eq_prec_choice_fem}
        \begin{split}
            \ve_l&\coloneqq \sqrt{k_p}h_l^{2},\quad l=0,\ldots,L-1,\\
            \ve_L&\coloneqq k_ph_L^2.
        \end{split}
    \end{equation}}
%
 
    \rev{To quantify the cost gains, let us apply the abstract analysis from Section \ref{sec_cost_analysis} to Problem \ref{pr_qoi}. Using an optimal linear solver (e.g., multigrid), we have $\var[Y_l]=O(2^{-4l})$ and $C_l=O(2^{2l})$ (see \cite{cliffe2011multilevel}). If in \eqref{eq_cost_saving_ass} we have $q=1/4$, for example, then \mbox{Corollary \ref{cor_mpml_mlmc_cost_savings}} predicts that the cost is reduced by at least a factor of 1.6.} 
    
    In our experiments, using the MINRES method, we observed an actual cost gain in terms of floating point operations by a factor of $\approx1.5$ (see Figure \ref{fig_mpml_minres_cost}). When a low precision sparse direct solver with iterative refinement is used, the reduction in allocated memory is reduced by up to a factor of $\approx3.5$ (see Figure \ref{fig_memory_cost_cholesky_simple}).

\begin{remark}[\rev{Application to other sampling methods}]\label{rem:other_samplers}
    \rev{Note that the extension of the standard MLMC assumptions \eqref{eq_mlmc_biasvarcost} to the more general MPML error model \eqref{eq_mpmlmcbiasdec3} and \eqref{eq_mpmlmcvardec3} is not specific to multilevel Monte Carlo. Also, Lemma \ref{th_biasvar_dec_MPMLMC} is not specific to the sampling method. It is a statement about the quantity of interest.} 
    
    \rev{In fact, assumptions of the form \eqref{eq_mlmc_biasvarcost} are central to the analysis of many other multilevel sampling methods, such as multilevel MCMC \cite{dodwell2019multilevel}. The assumptions \cite[Theorem 3.4, Assumptions M1., M2., M3.]{dodwell2019multilevel} for the analysis of multilevel MCMC are analogous to \eqref{eq_mlmc_biasvarcost} and the method is also tested on a similar model problem;
see \cite[eq. (4.2)]{dodwell2019multilevel}. They can be extended in the same way as \eqref{eq_mlmc_biasvarcost} to \eqref{eq_mpmlmcbiasdec4} and \eqref{eq_mpmlmcvardec4} to obtain a performance gain in multilevel MCMC via inexact computations and mixed precision arithmetic. As a second example, consider multi-index Monte Carlo \cite{haji2016multi}. An analogy to \eqref{eq_mlmc_biasvarcost} for MIMC is \cite[Assumptions 1 to 3]{haji2016multi}. An extension of these assumptions to take into account computational error is again straightforward.}
\end{remark}

\section{Numerical results}\label{chap_num_results}

We provide numerical experiments demonstrating the potential cost savings of the adaptive MPML algorithm (Algorithm \ref{alg_MPMLMC}) over a standard adaptive MLMC algorithm. The adaptive MPML algorithm preserves the overall computational accuracy. As a suitable model, we use an elliptic PDE with lognormal random coefficients with both a direct and an iterative linear solver. 

\rev{In principle, any suitable iterative solver with the stopping criterion given by \eqref{eq_eff_FEM_solution_linsys} can be used
to compute the approximate discrete solution $\u_{h_l}$ on the level $l$ effectively to precision $\ve_l$. Since the values of~$\ve_l$ obtained using the adaptive MPML algorithm are typically relatively ``big'' (see Section \ref{sec_num_res_ellipticpde} for examples), the iterative solver can potentially achieve the tolerance in a very small number of iterations, leading to significant cost gains. As an example, we employ MINRES as described in Section \ref{sec_kryl_methods}.
In this case, the cost gains do not come primarily from using low precision, but rather from reducing the number of iterations. 
A technique, where the cost gains come from the use of low precision, is iterative refinement. It can in principle be used with any direct or iterative solver; see Section \ref{sec_itref} and \cite{carson2018accelerating}. Here, we used it in combination with a direct solver based on Cholesky factorisation.}

All numerical experiments are implemented in Python. The codes are available at \mbox{\url{https://github.com/josef-martinek/mpml}}.

\subsection{Elliptic PDE with lognormal coefficients -- iterative solver}\label{sec_num_res_ellipticpde}

We will solve an equation of the following form, which is a special case of \eqref{eq_avp_rnd}:
\begin{align}\label{eq_pde_lognormal_coef}
    \begin{split}
    -\nabla\cdot\bigl(a(\cdot,\omega)\nabla u(\cdot,\omega)\bigl)&=f\quad \text{on }D,\\
    u(\cdot,\omega)&=0\quad \text{on } \partial D,
    \end{split}
\end{align}
for a given random field $a$ and a deterministic right-hand side $f$. The random field is chosen in such a way that it corresponds to a truncated Karhunen-Loève expansion of a suitable covariance operator, in particular,
\begin{equation}\label{eq_rf_logn}
    a(x_1,x_2,\omega)=\exp{\biggl(\sum_{j=1}^s\omega_j\frac{1}{j^q}\sin{(2\pi jx_1)}\cos{(2\pi jx_2)}\biggr)}.
\end{equation}
Here $\omega=(\omega_1,\ldots,\omega_s)\in\R^s$ is such that $\omega_j\sim N(0,\sigma^2)$ for a fixed $\sigma>0$. Random fields of this form are widely used; see \cite{babuska2004galerkin}, \cite{nobile2008sparse}, and \cite{cliffe2011multilevel} for examples.

As the quantity of interest we choose
\begin{equation*}
    \E[Q]=\int_\Omega \Bigl(\int_D u(x_1,x_2,\omega)\text{d}(x_1,x_2)\Bigr)\text{d}\omega.
\end{equation*}

Due to the fact that $\omega_j\sim N(0,\sigma^2)$, assumptions \eqref{eq_ass_coeff_bound_rnd} and \eqref{eq_ass_higher_regularity_rnd} are not satisfied. By choosing, e.g., $\omega_j\sim \text{Uni}(0,c)$, one could ensure that both assumptions are satisfied, but we want to test the developed methods in a more realistic setting.
The analysis in Section \ref{sec_fem_rand_fin_prec} could easily be extended to this setting, using the analysis for MLMC presented in \cite{teckentrup2013further}, but we did not present this here to avoid unnecessary technicalities.

\begin{table}[t]
    \centering
    \begin{tabular}{|r|r r r r r|}
		\cline{2-6}
		\multicolumn{1}{c|}{} & \multicolumn{5}{c|}{$k_p=0.05$} \\
		\hline
		\multicolumn{1}{|c|}{$L$} & \multicolumn{1}{c|}{$\ve_0$} & \multicolumn{1}{c|}{$\ve_1$} & \multicolumn{1}{c|}{$\ve_2$} & \multicolumn{1}{c|}{$\ve_3$} & \multicolumn{1}{c|}{$\ve_4$} \\
		\hline
		$1$ & $3.5$e$-3$ & $1.9$e$-4$ & - & - & - \\
        \hline
		$2$ & $3.5$e$-3$ & $8.7$e$-4$ & $4.9$e$-5$ & - & - \\
		\hline
        $3$ & $3.5$e$-3$ & $8.7$e$-4$ & $2.2$e$-4$ & $1.2$e$-5$ & - \\
		\hline
        $4$ & $3.5$e$-3$ & $8.7$e$-4$ & $2.2$e$-4$ & $5.5$e$-5$ & $3.1$e$-6$ \\
		\hline
	\end{tabular}
    \quad
    \begin{tabular}{|c|c|}
        \hline
         $l$ & precision values \\
         \hline
         $0$ & $hhss$ \\
         \hline
         $1$ & $ssss$ \\
         \hline
         $2+$ & $ssdd$ \\
         \hline
    \end{tabular}
    \caption{Left: Required effective precision on each level (in terms of relative residual), determined by the adaptive MPML algorithm (Algorithm \ref{alg_MPMLMC}) for different values of the finest level $L$. \ Right: Choice of precision values $(\ur_f,\ur_7,\ur,\ur_r)$ for iterative refinement on each level $l$. The factorisation is carried out in half precision on level $0$.}
    \label{tab_Test2_MPMLMC_setup_k0.1}
\end{table}

In the first example of this section, we choose the data in \eqref{eq_pde_lognormal_coef} as follows. The right-hand side satisfies $f\equiv 1$ \rev{on $D=(0,1)^2$} and the parameters in the coefficient function are chosen as $s=4$, $q=2$, and $\sigma=2$. To solve this problem numerically, we discretise~\eqref{eq_pde_lognormal_coef} using the FEM as described in Section \ref{sec_fem_rand} with simplical elements implemented in FEniCSx \cite{baratta_2023_10447666}. In this experiment, the discretisation parameter on the coarsest mesh is $h_0=1/8$ and the mesh is refined on each level by a factor $m=2$.

Recall that in all our experiments we refer to standard MLMC as Algorithm \ref{alg_MPMLMC} without step \ref{alg_step_prec_choice}, where the computations in step \ref{alg_step_samples_comp} are carried out with an a-priori given, sufficiently high accuracy. In each experiment we always describe precisely what this accuracy is. For MLMC, the parameters $\alpha$ and $\beta$ in the MLMC complexity theorem \cite[Theorem 1]{cliffe2011multilevel} (see also \eqref{eq_mlmc_biasvarcost}) can be chosen to be $\beta=4$, $\alpha=2$ for the random field in \eqref{eq_rf_logn}. To set up the MPML algorithm (Algorithm \ref{alg_MPMLMC}), we further need the parameters $\alpha_1$, $\alpha_2$, $\beta_1$, and $\beta_2$ from the MPML complexity theorem (Corollary \ref{cor:abstractMPMLMCcomplex_fem}). As shown in Lemma \ref{th_biasvar_dec_MPMLMC}, we have $\alpha_1=\alpha=2$, $\beta_1=\beta=4$, $\alpha_2=1$, and $\beta_2=2$, \rev{which results in the effective precision choice given by \eqref{eq_prec_choice_fem}}. In both 
adaptive algorithms, the underlying linear systems are solved using the PETSc implementation of MINRES \cite{petsc_4_py} with a stopping criterion given by the relative residual norm. The tolerances for the stopping criterion in the MPML algorithm are given in Table \ref{tab_Test2_MPMLMC_setup_k0.1} (left). The standard MLMC algorithm uses a fixed tolerance specified below for each experiment and all computations are carried out in double precision without iterative refinement. The cost of solving the linear system is estimated in terms of the number of floating point operations (FLOPs) performed by MINRES. To count the FLOPs
we use the PETSc GetFlops() function.

\begin{figure}[t]
         \centering
         \includegraphics[scale=0.35]{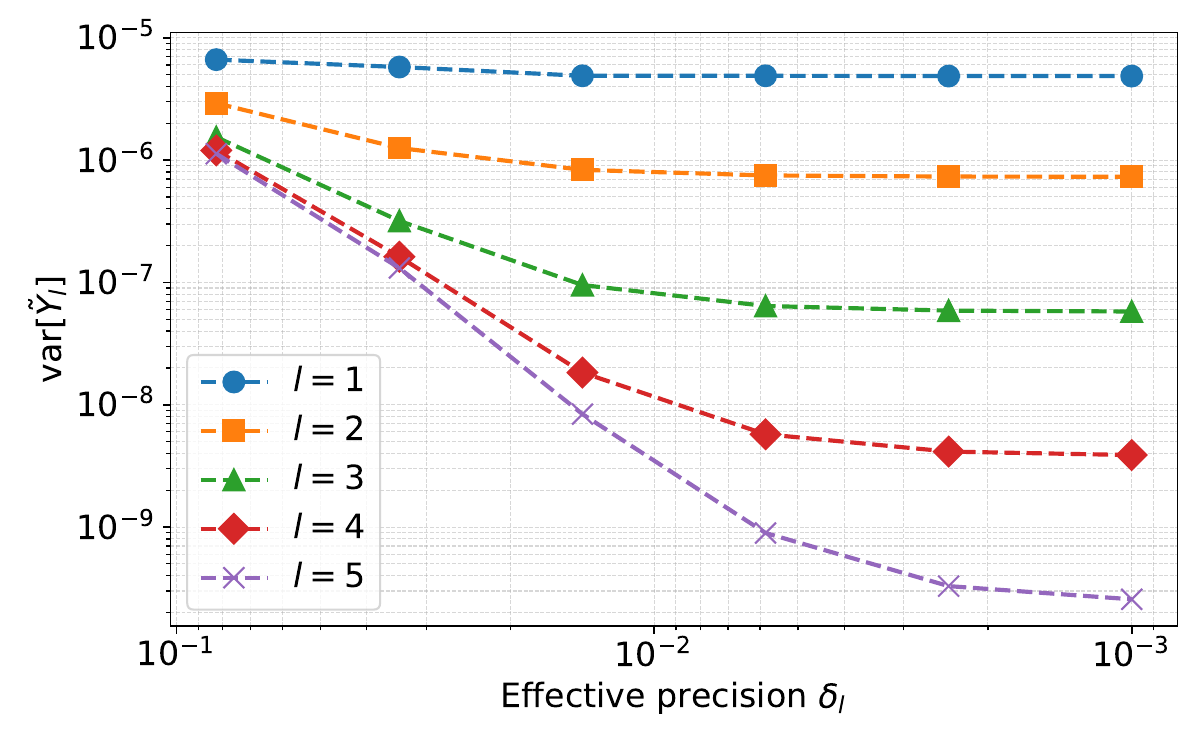}
     \hspace{2mm}
         \includegraphics[scale=0.35]{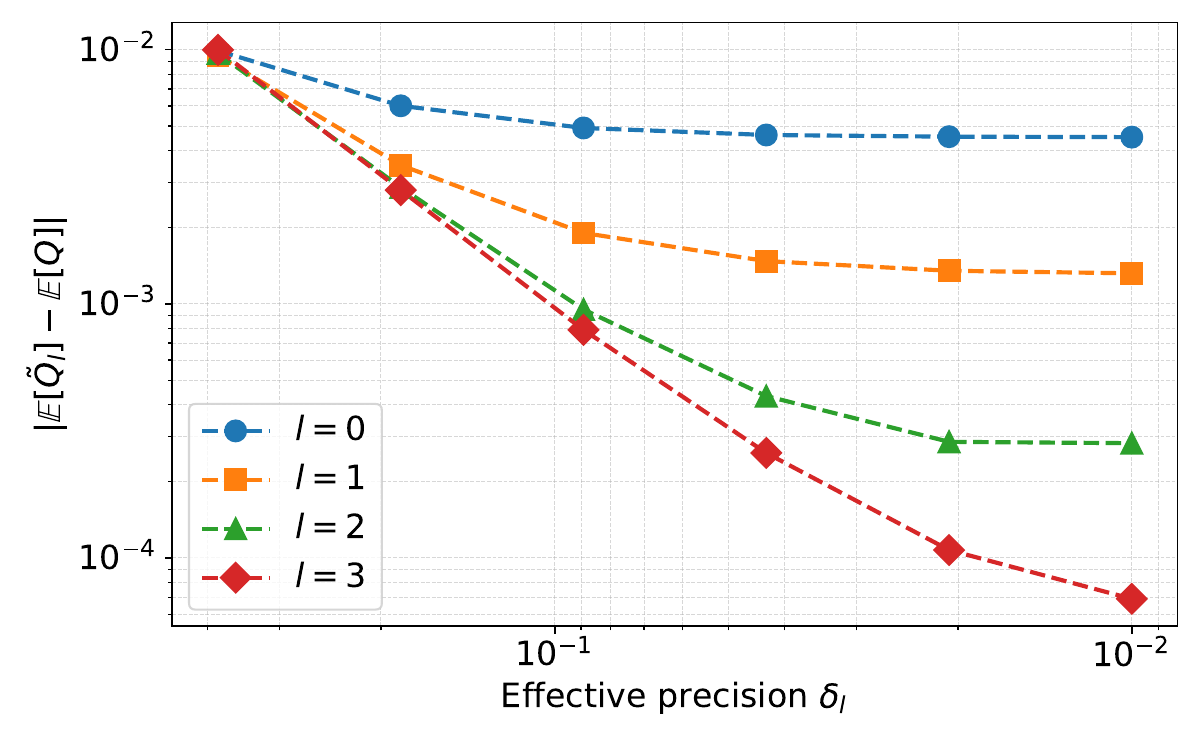}
        \caption{\rev{Computational error in the variance of the difference estimators $\var[\Tilde{Y}_{l}]$ (left) and in the bias $|\E[\Tilde{Q}_{l}-Q]|$ (right), plotted against effective precision. Discretisation level $l$ corresponds to $h_l=1/8\times2^{-l}$. To achieve an effective precision $\ve_l$, relative residuals produced by MINRES are monitored.}}
        \label{fig_biasvardec}
\end{figure}

To determine the effective precision $\ve_l$ in the MPML adaptive algorithm, we use formula \eqref{eq_prec_choice} 
with the constant $k_p=0.05$. As discussed below and depicted in Figure \ref{fig_mpml_minres_kp_mse}, the MPML algorithm is not sensitive with respect to the choice of $k_p$. For a multilevel estimator with a given number of levels, the choice of $k_p=0.05$ then determines uniquely the effective precisions $\ve_l$ on all levels according to \eqref{eq_prec_choice}. The values of $\ve_l$ for different estimators can be found in Table \ref{tab_Test2_MPMLMC_setup_k0.1} (left).

\rev{We begin by visualising computational error in the variance $\var[\Tilde{Y}_{l}]$ and in the bias $|\E[\Tilde{Q}_{l}-Q]|$ (Figure \ref{fig_biasvardec}). We plot both variance and bias against effective precision $\ve_l$. 
To achieve a given value of the effective precision $\ve_l$, we monitor the  relative residuals produced by the MINRES method.
A bound for the computational error in these quantities has been given in Lemma \ref{th_biasvar_dec_MPMLMC}.}

\rev{In Figure \ref{fig_biasvardec}, we observe that the discretisation error quickly begins to dominate the variance and the bias on each level, as the computational error decays. On the fine levels, the variance decay rate (left) and the bias decay rate (right) become apparent. The initial decay rate is at least $O(\ve_l^2)$ for the variance and at least $O(\ve_l)$ for the bias, which are the rates obtained in Lemma \ref{th_biasvar_dec_MPMLMC}. 
As the discretisation error starts to dominate, i.e., for small values of $\ve_l$ and large values of $h_l$, standard multilevel variance and bias are recovered. The sample variances have been produced using $10^2$ samples each, while for the bias estimates $10^4$ samples were used.}

\rev{Let us note that the results in Figure \ref{fig_biasvardec} are not specific to the multilevel Monte Carlo method, only to the QOI. This shows the potential for using inexact computations in a broad spectrum of multilevel sampling methods if the QOI is sufficiently well-behaved, as discussed in Remark \ref{rem:other_samplers}.}
\begin{figure}[t]
\begin{minipage}[t]{0.48\textwidth}
    \centering
    \vspace{0pt} 
        \includegraphics[scale=0.35]{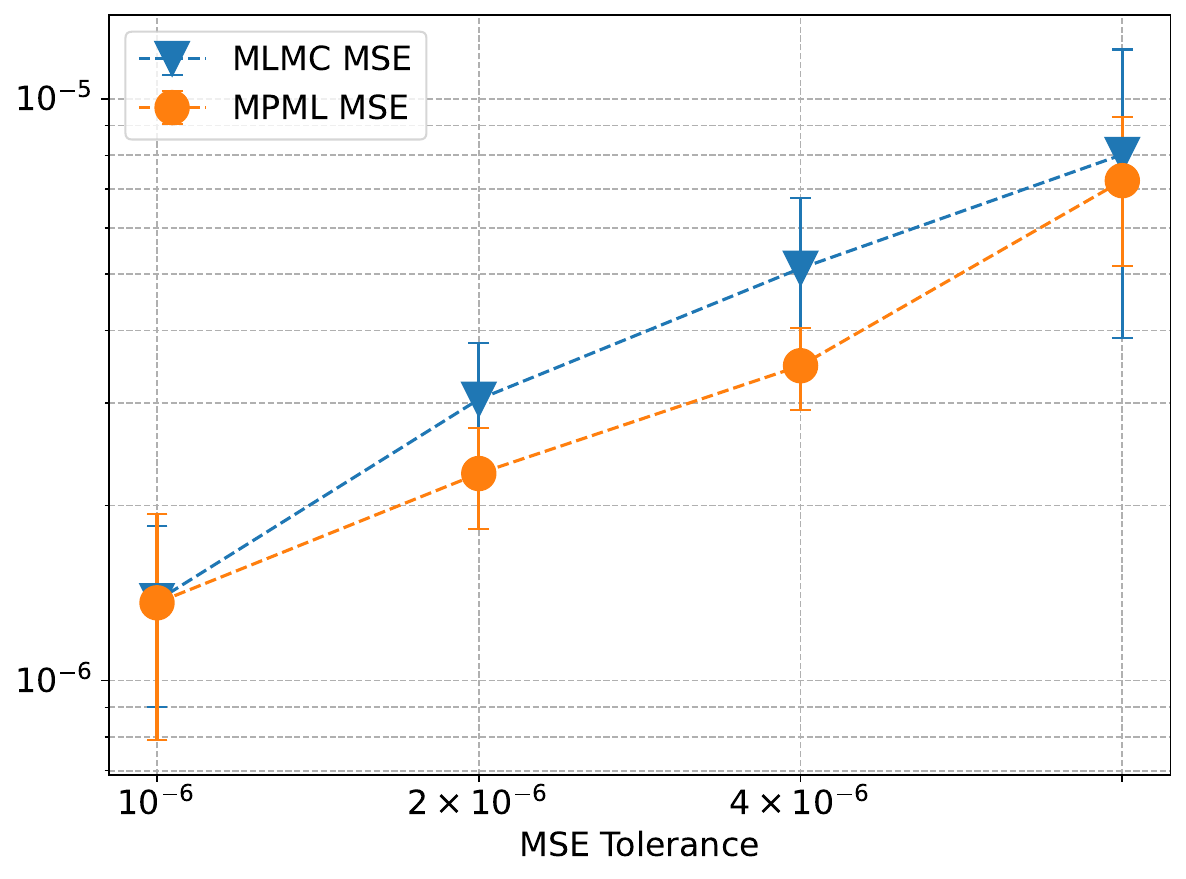}
        \captionof{figure}{Estimated MSE with approximate $95\%$ confidence intervals for the MPMC estimator when compared to the reference MLMC estimator for various target tolerances. As their linear solver, both estimators use MINRES; MPML uses an adaptively chosen stopping criterion.}
        \label{fig_mpml_minres_mse}
\end{minipage}
\hspace{2mm}
\begin{minipage}[t]{0.48\textwidth}
    \centering
    \vspace{0pt}
        \includegraphics[scale=0.35]{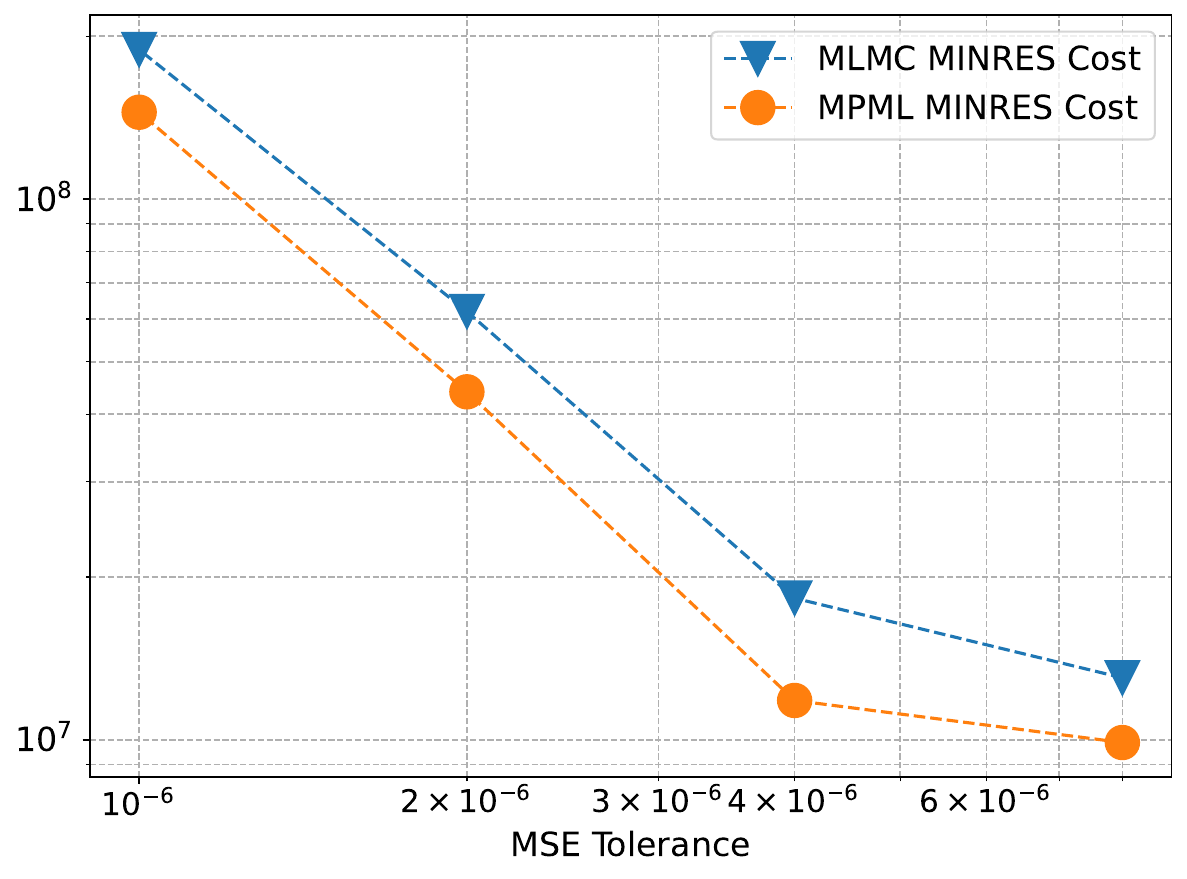}
        \captionof{figure}{Total cost gain in terms of FLOPs for various tolerances using adaptive MPML compared to the standard adaptive MLMC estimator. As their linear solver, both estimators use MINRES; MPML uses an adaptively chosen stopping criterion.}
        \label{fig_mpml_minres_cost}
\end{minipage}
\end{figure}
\begin{figure}[t!]
\begin{minipage}[t]{0.48\textwidth}
    \centering
    \vspace{0pt} 
        \includegraphics[scale=0.35]{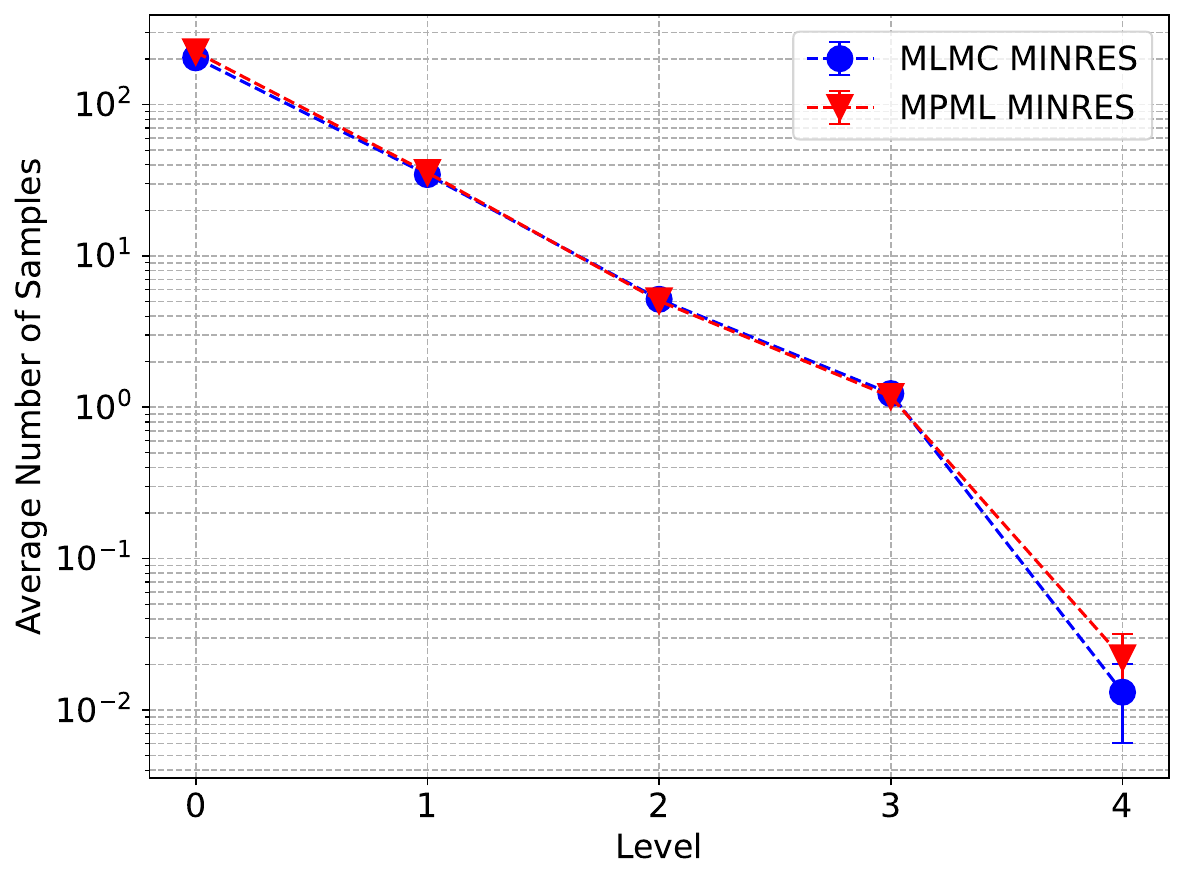}
        \captionof{figure}{Average number of samples on each level for adaptive MLMC and adaptive MPML for $\text{\text{TOL}}^2=10^{-6}$ (with approximate $95\%$ confidence intervals).}
        \label{fig_mpml_minres_nsamp}
\end{minipage}
\hspace{2mm}
\begin{minipage}[t]{0.48\textwidth}
    \centering
    \vspace{0pt}
        \includegraphics[scale=0.35]{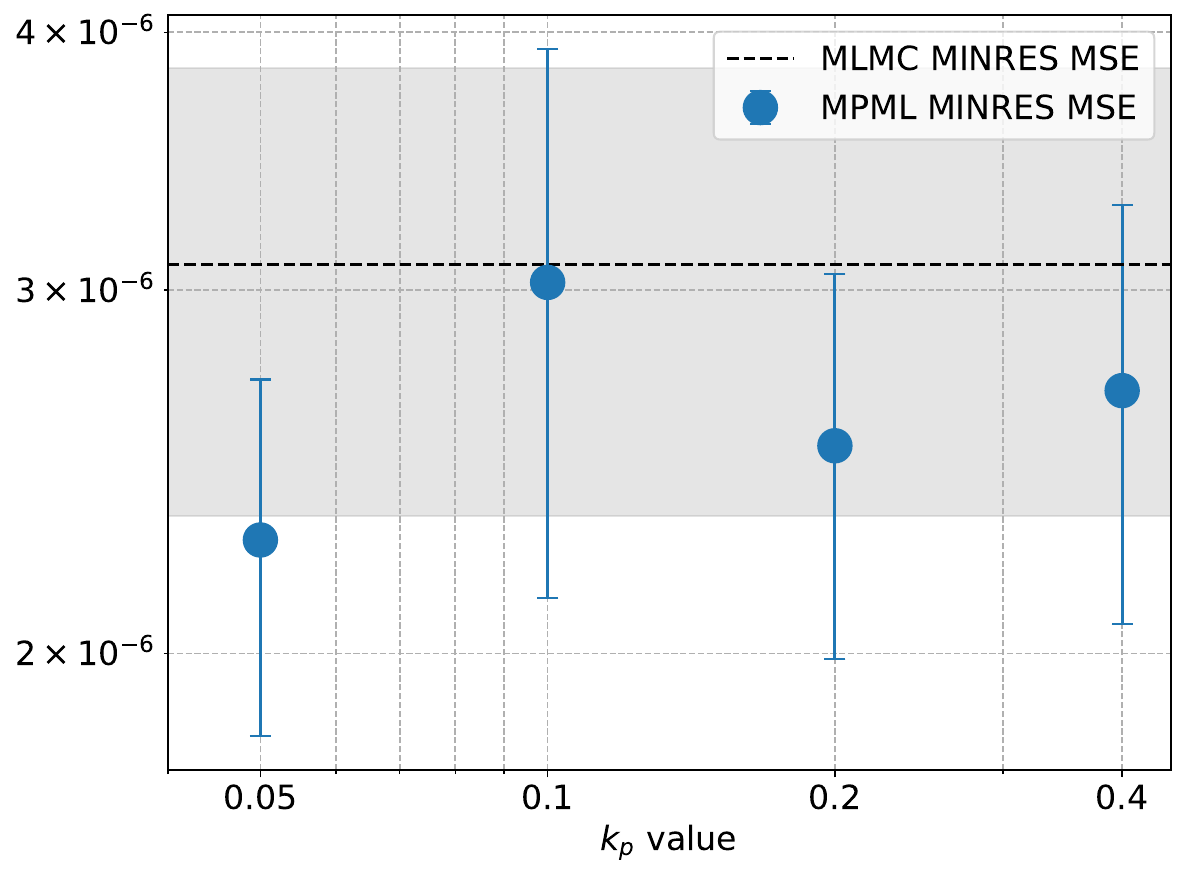}
        \captionof{figure}{MSE of the MPML algorithm for various values of $k_p$ compared to the reference MSE of MLMC (with approximate $95\%$ confidence intervals).}
        \label{fig_mpml_minres_kp_mse}
\end{minipage}
\end{figure}

We now aim to verify the accuracy of the MPML algorithm. We execute $1000$ runs of the adaptive MPML algorithm (Algorithm \ref{alg_MPMLMC}) for each of the four values of the MSE tolerance $\text{TOL}^2$, namely $\text{TOL}^2=8,4,2,1\times 10^{-6}$. For the same tolerances we perform $1000$ runs of the standard MLMC algorithm. The standard MLMC algorithm uses a fixed tolerance of $10^{-6}$ for the MINRES stopping criterion. Figure~\ref{fig_mpml_minres_mse} shows the MSE of the estimates obtained by both algorithms averaged over the number of runs with approximate $95\%$ confidence intervals. To estimate the MSE of both algorithms a reference value of the QoI is computed by the standard MLMC algorithm with tolerance $\text{TOL}^2=2\times10^{-8}$ and a direct, double precision linear solver. Up to statistical errors, both estimators have a similar MSE. However, they do differ slightly, due to the discrete choices in the two adaptive procedures, e.g., of the total number of levels $L$. 

In Figure \ref{fig_mpml_minres_cost}, we demonstrate the cost gains obtained by MPML (Algorithm \ref{alg_MPMLMC}) compared to standard MLMC, again with a fixed tolerance of $10^{-6}$ in the MINRES stopping criterion.
We plot the average cost in terms of FLOPs 
for each tolerance and
observe that it is consistently reduced by a factor of $\approx1.5$. As demonstrated in Figure \ref{fig_mpml_minres_mse}, this cost gain comes with no statistically significant loss of accuracy. It results from the fact that the effective precision $\ve_l$ for solving the linear systems on the coarser levels is chosen significantly larger than $10^{-6}$ in MPML,  see Table \ref{tab_Test2_MPMLMC_setup_k0.1} (left).

At the beginning of the cost analysis in Section \ref{sec_cost_analysis} we made the assumption that the numbers of samples on each level are chosen approximately the same within the MPML and the MLMC adaptive procedures. We verify numerically that this is true. Figure \ref{fig_mpml_minres_nsamp} shows the average number of samples obtained by MPML (Algorithm \ref{alg_MPMLMC}) and standard MLMC on each level for the tolerance $\text{TOL}^2=10^{-6}$. We observe that the number of samples are equal within statistical error given by the approximate $95\%$ confidence intervals. For this experiment, the standard MLMC algorithm uses a~fixed tolerance of $10^{-10}$ in the MINRES stopping criterion. \rev{Note that due to the fact that the finest level is not specified a-priori in Algorithm \ref{alg_MPMLMC} and due to the stochastic nature of the discrete choices in Algorithm \ref{alg_MPMLMC}, Level 4 is only reached in about 2 \% of all runs. This suggests that in this example Level 4 might not be needed in practice.} Note also that we need on average only one sample on Level 3.

In Section \ref{sec_MPMLMC_adapt} we claimed that the MPML algorithm is not sensitive to the a-priori chosen constant $k_p$ used to determine the required computational accuracy. Figure~\ref{fig_mpml_minres_kp_mse} shows the average MSE of the MPML algorithm (Algorithm \ref{alg_MPMLMC}) for the MSE tolerance $\text{TOL}^2=2\times10^{-6}$ with approximate $95\%$ confidence intervals. The MSE is estimated using $1000$ runs of the algorithm for the values $k_p=0.05, 0.1, 0.2, 0.4$. The estimated MSE of the standard MLMC algorithm is shown for comparison with a dashed line. It is estimated using $1000$ runs of standard MLMC with a tolerance of $10^{-10}$ in the MINRES stopping criterion. The choice of the constant $k_p$ seems to have no significant impact on the overall accuracy, provided it is sufficiently small. For our suggested choice of $k_p=0.05$ the computational error in our model problem is bounded safely.

\subsection{Elliptic PDE with lognormal coefficients -- direct solver}

In this section, we again solve \eqref{eq_pde_lognormal_coef} with the same input data, i.e., $f\equiv 1$ \rev{on $D=(0,1)^2$}, $s=4$, $q=2$, and $\sigma=2$. The coarsest mesh size is again $h_0=1/8$. However, here the underlying linear system (with symmetric positive definite matrix) is solved using a double precision Cholesky factorisation from PETSc in the standard MLMC algorithm and a low precision Cholesky factorisation with iterative refinement in our MPML algorithm. Recall that throughout our experiments by standard MLMC we mean Algorithm~\ref{alg_MPMLMC} without step \ref{alg_step_prec_choice}, where the computations in step \ref{alg_step_samples_comp} are (here) carried out with double precision. We use our own implementation of iterative refinement and of the low precision Cholesky factorisation. To carry out computations in half, single, and double precision, we use the numerical types of NumPy, namely float16(), float32(), and float64() for half, single, and double precision, respectively. Apart from the linear solver, all calculations are carried out in double precision.

The iterative refinement (Algorithm \ref{alg_itref}) is set up as follows. As in the previous section, the actual, numerical values in the stopping criterion are given by the required effective precisions $\ve_l$ on each level, specified by formula \eqref{eq_prec_choice}. To this end, we also choose again $\alpha_1=\alpha=2$, $\beta_1=\beta=4$, $\alpha_2=1$, and $\beta_2=2$. Table \ref{tab_Test2_MPMLMC_setup_k0.1} (left) shows the resulting values of the effective precision $\ve_l$.
Table \ref{tab_Test2_MPMLMC_setup_k0.1} (right) shows the exact setting of the other precisions used in the iterative refinement algorithm. The algorithm contains $3$ precisions, i.e., $\ur_f$, $\ur$, and $\ur_r$, each taking on one of the values quarter~($q$), half ($h$), single ($s$), or double ($d$). To simplify the notation, we describe the exact setting of the iterative refinement schematically as an ordered quadruple, e.g., $(\ur_f,\ur_8,\ur,\ur_r)=(hhss)$, where $\ur_8$ is the precision chosen at step \ref{aux27} of Algorithm~\ref{alg_itref}.

In the cost model~\eqref{eq_mlmc_biasvarcost} we choose $\gamma=2$. In 2D, this corresponds to the case when the linear system is solved in linear complexity, since the number of unknowns grows quadratically with the mesh size.

We start again by determining the accuracy of the MPML estimator $\qmpmlmc$, using the setting as described above, and we compare it to the accuracy of the standard MLMC estimator $\qmlmc$ with double precision Cholesky as the linear solver on all levels. We perform $1000$ runs of the MPML estimator and of the MLMC estimator for each of the MSE tolerances $\text{TOL}^2=8,4,2,1\times 10^{-6}$. To eliminate randomness from estimating the MSE and to get a better idea of how big the computational error actually is, we use the same numbers of samples in the MPML and MLMC estimators and the same seeds in the random number generators. The number of samples $\{N_l\}$ we use for both estimators is determined by $1000$ runs of the standard adaptive MLMC algorithm. Figure \ref{fig_mse_cholesky} shows the resulting average mean squared errors of the MPML and MLMC estimators. The difference in the MSEs of both estimators is very small; in fact the relative error of the MPML MSE with respect to the MLMC MSE is less than $0.01\%$. Recall that the MPML estimator uses half and single precision for the matrix factorisation on all levels; see Table \ref{tab_Test2_MPMLMC_setup_k0.1} (right). This promises significant memory savings when implemented efficiently on an architecture where half precision computations are supported.

We continue by estimating the cost gains using MPML estimator $\qmpmlmc$ with low precision Cholesky factorisation and iterative refinement compared to standard MLMC estimator $\qmlmc$ with double precision Cholesky. Again, both estimators use the same numbers of samples $\{N_l\}$. Since memory references are by far the most costly part on modern computing architectures, both in terms of time and energy cost (cf.~the discussion Section \ref{sec:energy}), we use memory access as a simple cost model, in terms of total number of bits loaded into main memory.
Thus, accessing a half precision floating point number is $2\times$ cheaper than accessing a single precision number and $4\times$ cheaper than accessing a double precision number. The simulated cost gain can then be estimated by counting the number of entries in all vectors and the number of non-zeros in all sparse factorisations of the matrices computed and stored in the process of solving the linear systems during the iterative refinement process. Figure \ref{fig_memory_cost_cholesky} shows the total cost gain per level (cost per sample $\times$ number of samples) for $\qmpmlmc$ compared to $\qmlmc$. On all levels we observe a simulated memory gain of $\approx2$. The total memory gain (sum over all levels) is $\approx 2$ as well.

In the last example, we show how further memory gains can be obtained using quarter precision on the coarsest level. For this experiment, we again solve \eqref{eq_pde_lognormal_coef} with $f\equiv1$ on $D$, $s=1$ and $\sigma=1$, but now we choose the coarsest mesh size to be \mbox{$h_0=1/4$}. To determine the effective precision $\ve_l$ in \eqref{eq_prec_choice} we choose $k_p=0.4$. Iterative refinement with the Cholesky solver is used with the following settings: $qhhh$ on level~$0$, and $hhss$ on levels $1$ to $3$. This means that on level $0$ where \mbox{$h_0=1/4$}, quarter precision (q43, see Section \ref{sec_fp}) is used for the Cholesky factorisation. To simulate quarter precision, the {\tt pychop} package of Xinye Chen was used; see \cite{pychop}. Figures~\ref{fig_mse_cholesky_simple}~and~\ref{fig_memory_cost_cholesky_simple} show the MSE for MPML and MLMC and the simulated memory gain, respectively. They have been produced analogously to Figures~\ref{fig_mse_cholesky}~and~\ref{fig_memory_cost_cholesky}. We observe that for the MSE tolerance $2\times10^{-6}$, the MPML relative error is $11\%$ bigger compared to standard MLMC error with the same number of samples, while we are able to achieve a total memory gain of $\approx3.5$. Recall that the cost gain is simulated by counting non-zeros computed and stored in the process of solving the linear system~$Ax=b$.

We have not encountered overflow in any of the examples presented in this section when working with the lower precisions. It is important to note that due to the fact that the coefficients of the PDE are lognormally distributed, overflow can occur with low probability. In the case where overflow occurs, scaling or shifting techniques can be used; see \cite{higham2019squeezing}.
\begin{figure}[t]
\begin{minipage}[t]{0.48\textwidth}
    \centering
    \vspace{0pt} 
        \includegraphics[scale=0.35]{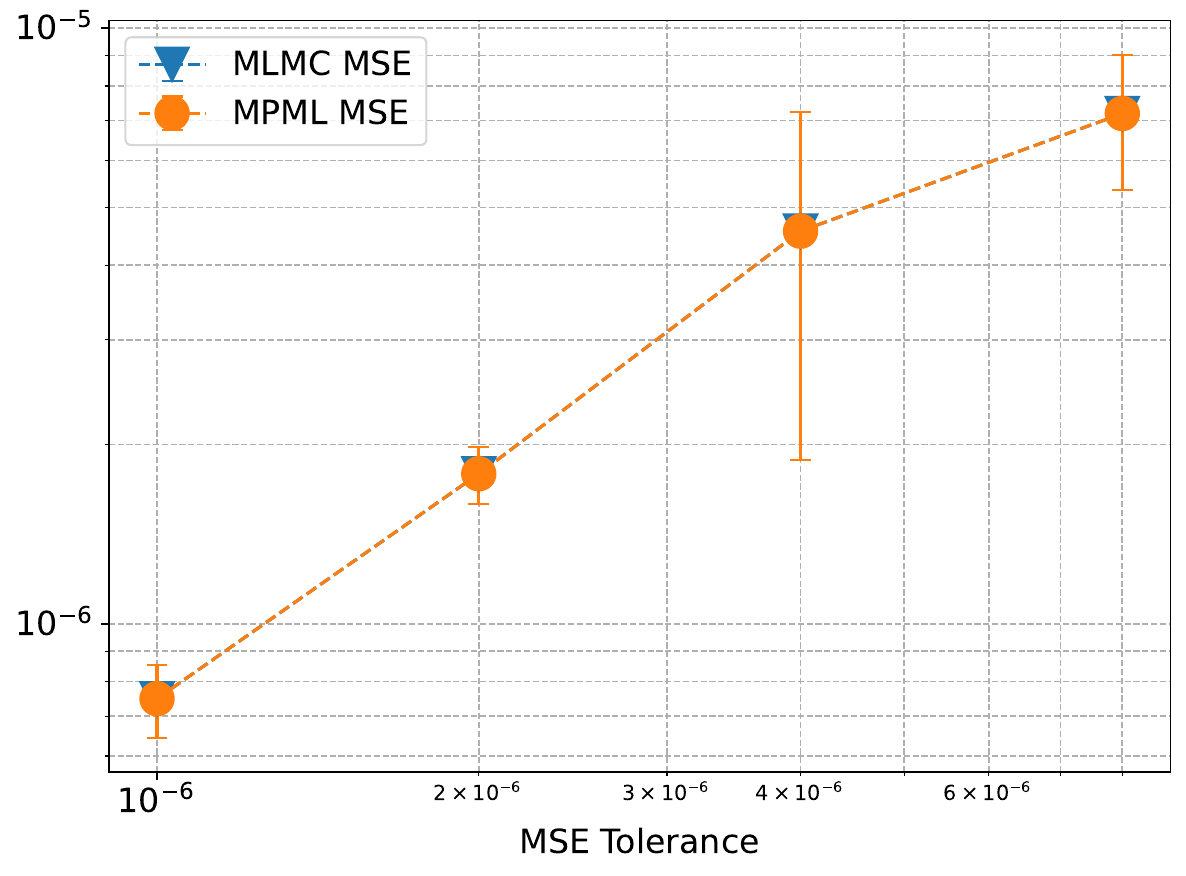}
        \captionof{figure}{Comparison of estimated MSE for MPML and  MLMC using the same number of samples and the same random seeds (with approximate $95\%$ confidence intervals). MLMC uses double precision Cholesky, while MPML uses low precision Cholesky with iterative refinement.}
        \label{fig_mse_cholesky}
\end{minipage}
\hspace{2mm}
\begin{minipage}[t]{0.48\textwidth}
    \centering
    \vspace{0pt}
        \includegraphics[scale=0.35]{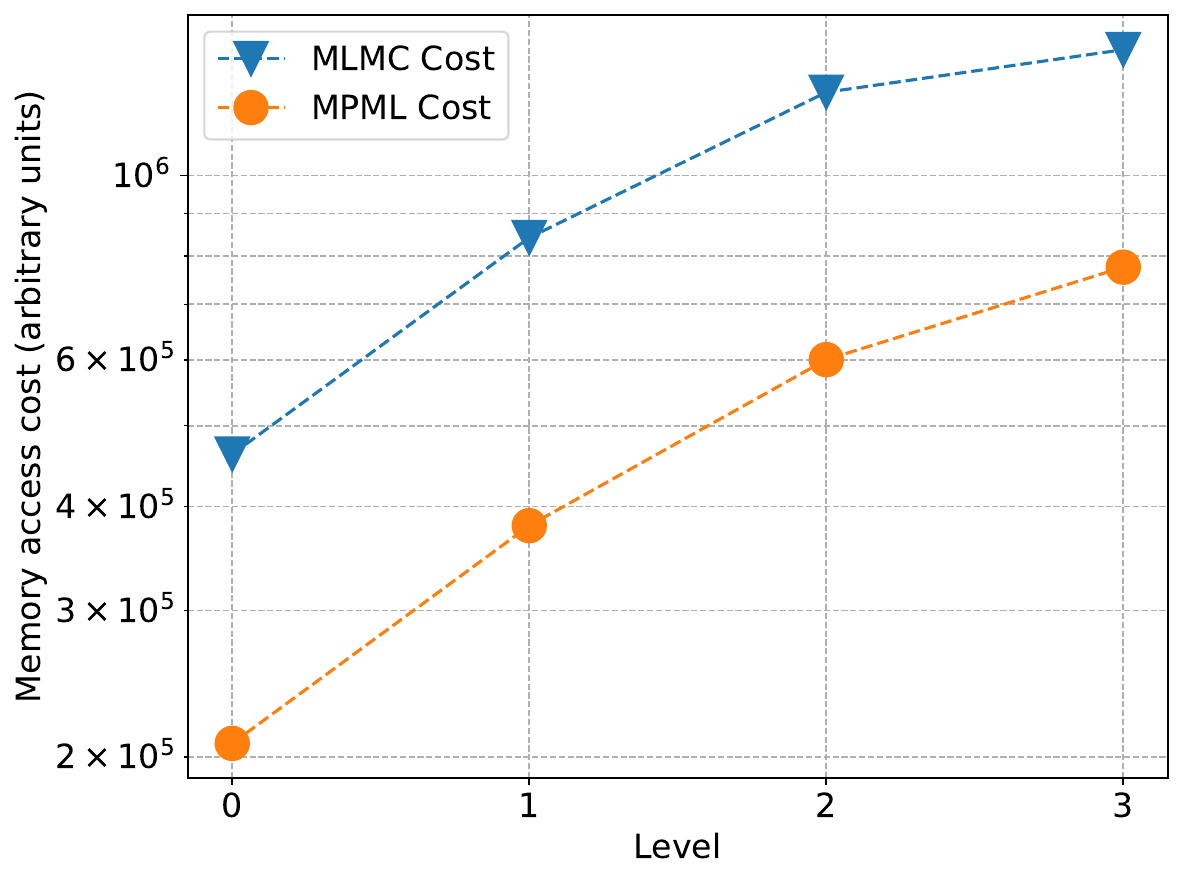}
        \captionof{figure}{Comparison of total costs of the estimators per level in terms of memory access for MPML  and MLMC. MLMC uses double precision Cholesky, while MPML uses low precision Cholesky with iterative refinement.}
        \label{fig_memory_cost_cholesky}
\end{minipage}
\end{figure}
\begin{figure}[t]
\begin{minipage}[t]{0.48\textwidth}
    \centering
    \vspace{0pt} 
        \includegraphics[scale=0.35]{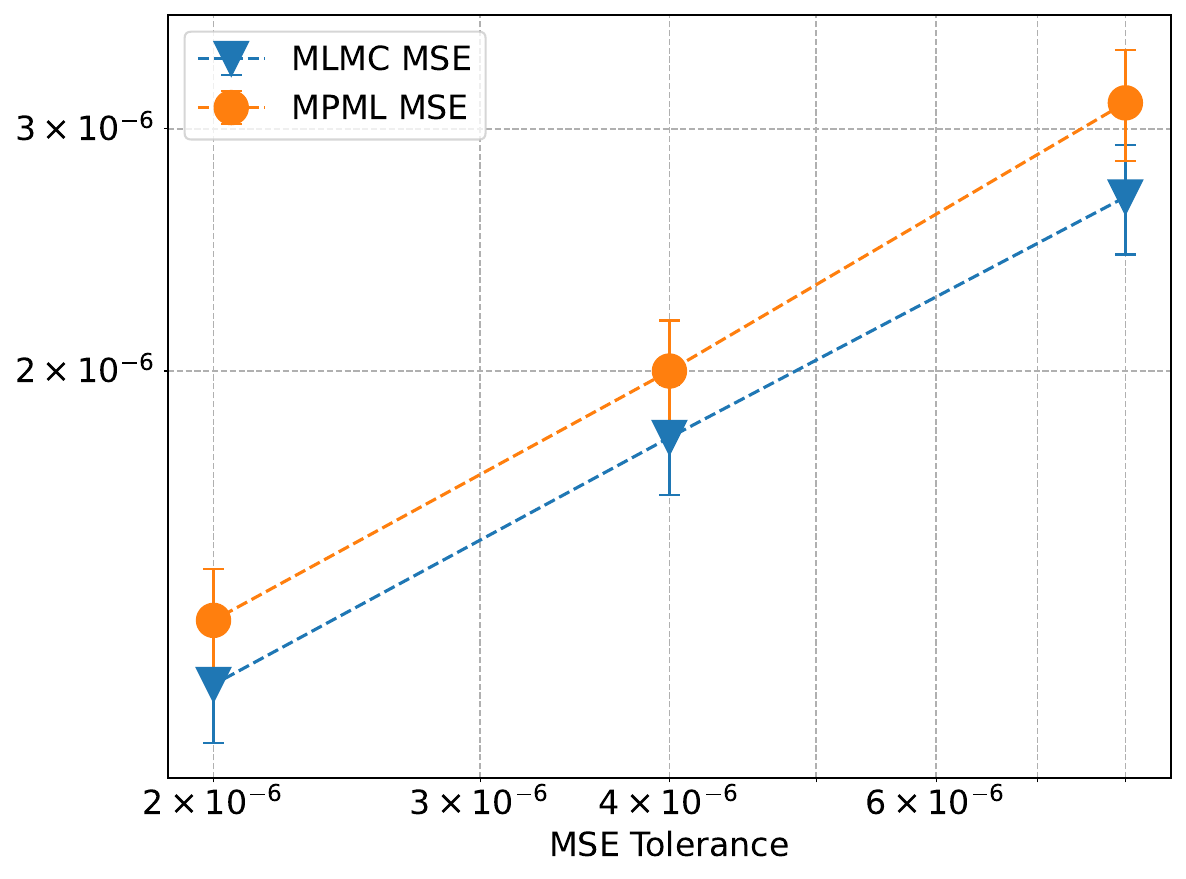}
        \captionof{figure}{Comparison of estimated MSE for MPML and  MLMC using the same number of samples and the same random seeds (with approximate $95\%$ confidence intervals). MLMC uses double precision Cholesky, while MPML uses low precision Cholesky with iterative refinement (here with quarter precision on level $0$).}
        \label{fig_mse_cholesky_simple}
\end{minipage}
\hspace{2mm}
\begin{minipage}[t]{0.48\textwidth}
    \centering
    \vspace{0pt}
        \includegraphics[scale=0.35]{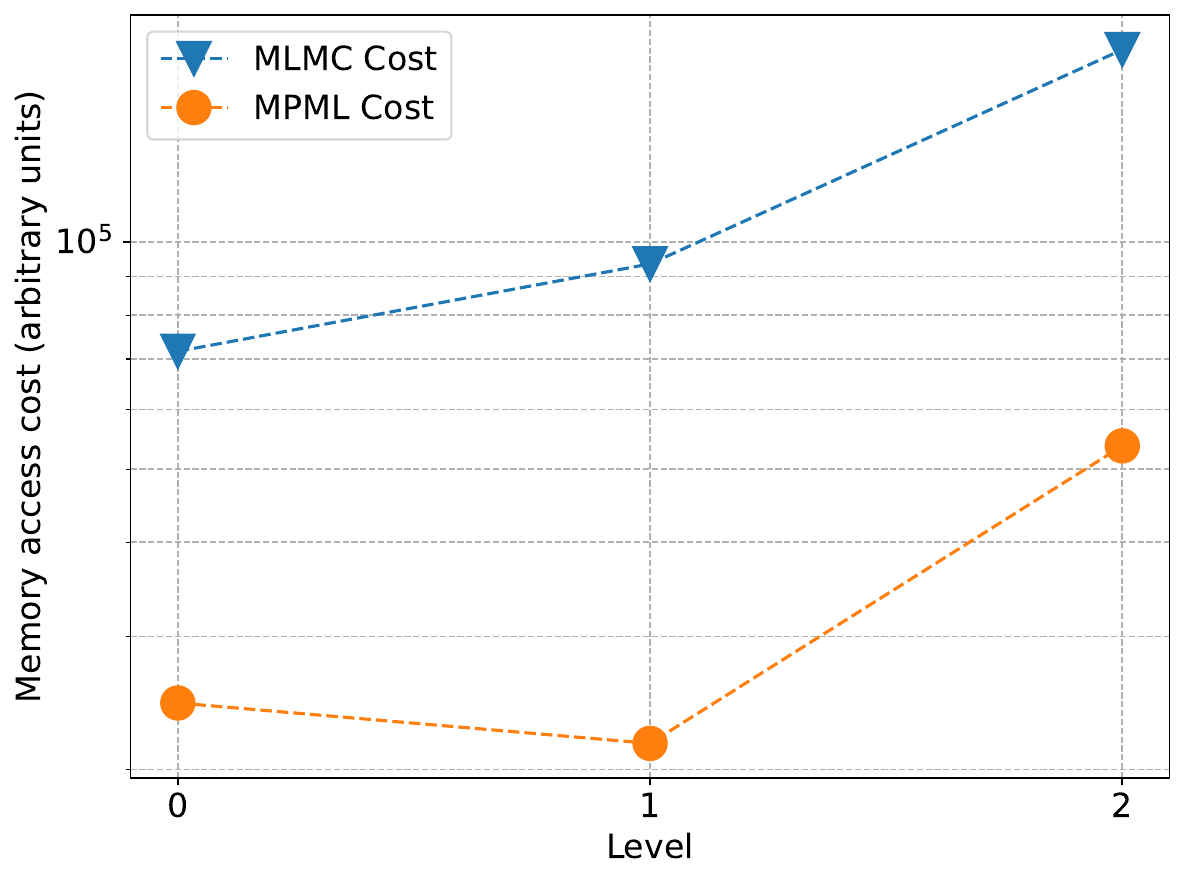}
        \captionof{figure}{Comparison of total costs of the estimators per level in terms of memory access for MPML  and MLMC. MLMC uses double precision Cholesky, while MPML uses low precision Cholesky with iterative refinement (here with quarter precision on level $0$).}     
        \label{fig_memory_cost_cholesky_simple}
\end{minipage}
\end{figure}

\subsection{\rev{Energy savings through iterative refinement}}
\label{sec:energy}

\rev{We obtained a simulated gain of up to $3.5\times$ in terms of memory references in our experiments, see Figure~\ref{fig_memory_cost_cholesky}. The ratio of the cost of one memory access to one floating point operation is continuing to grow in recent architectures \cite[Table~2]{jouppi2021tenlessons}. On modern 7nm semiconductor architectures, a cache (SRAM) memory access is on average $10\times$ to $100\times$ more expensive in terms of energy than a floating point operation, while
 accessing SRAM is between $10\times$ and $100\times$ cheaper than accessing DRAM. Therefore, an efficient parallel implementation of iterative refinement within the proposed adaptive algorithm (Algorithm~\ref{alg_MPMLMC}) that allows one to store more data in the cache memory can bring a significant reduction in energy cost, which is a promising area for future research.}

\section{Conclusion}
Multilevel sampling methods have proven to be powerful tools for uncertainty quantification, offering significant performance improvements by efficiently redistributing computational work across a hierarchy of models. In this paper, we demonstrated how leveraging computations of lower accuracy on coarser levels can further enhance the efficiency of these methods in high-performance computing applications. As a use case, we have developed an adaptive algorithm to determine the minimum required computational accuracy for each level in the multilevel Monte Carlo method.

Through two practical examples, we showcased the potential of our approach to obtain significant cost gains. Using a low-precision sparse direct solver with iterative refinement, we achieved a simulated memory gain of up to \(3.5\times\), while employing a MINRES iterative solver yielded a speedup of \(2\times\) in floating point operations. While these results highlight the significant potential for energy-aware scientific computing, \rev{the presented examples serve as a proof-of-concept for the developed method rather than a robust demonstration across a spectrum of problem difficulties.} The potential for future work lies in applying this approach to other uncertainty quantification frameworks, such as multilevel Markov chain Monte Carlo or the multilevel stochastic collocation method and exploring its broader applications in scientific computing. The efficient parallel implementation of low-precision solvers within multilevel sampling methods is of great interest and promises to bring further speedup.

\section*{Acknowledgements} This work is supported by the Carl Zeiss-Stiftung through the project “Model-Based AI: Physical Models and Deep Learning for Imaging and Cancer Treatment” and by the Deutsche Forschungsgemeinschaft (German Research Foundation) under Germany’s Excellence Strategy EXC 2181/1 - 390900948 (the Heidelberg STRUCTURES Excellence Cluster). The second author is supported by the Charles University Research Centre program No. UNCE/24/SCI/005 and the European Union (ERC, inEXASCALE, 101075632). Views and opinions expressed are those of the authors only and do not necessarily reflect those of the European Union or the European Research Council. Neither the European Union nor the granting authority can be held responsible for them. 


\printbibliography

\end{document}